\numberwithin{equation}{subsection}
\theoremstyle{plain}
\newtheorem{theorem}[equation]{Theorem}
\newtheorem{lemma}[equation]{Lemma}
\newtheorem{proposition}[equation]{Proposition}
\newtheorem{corollary}[equation]{Corollary}
\theoremstyle{definition}
\newtheorem{definition}[equation]{Definition}
\newtheorem{example}[equation]{Example}
\newtheorem{remark}[equation]{Remark}
\DeclareMathOperator{\codim}{codim}
\DeclareMathOperator{\Imm}{Im}
\DeclareMathOperator{\gr}{gr}
\DeclareMathOperator{\Ord}{ord}
\DeclareMathOperator{\HFL}{HFL}
\DeclareMathOperator{\HL}{HL}
\DeclareMathOperator{\bdeg}{bdeg}
\DeclareMathOperator{\rank}{rank}
\DeclareMathOperator{\Gr}{Gr}
\newcommand{\Aa}{\mathcal{A}}
\newcommand{\Cc}{A^{-}}\newcommand{\Ccc}{\mathcal C}
\newcommand{\Ee}{\mathcal{E}}
\newcommand{\Kk}{\mathcal{K}}
\newcommand{\Ii}{\mathcal{I}}
\newcommand{\Jj}{\mathcal{J}}
\newcommand{\Ll}{\mathcal{L}}
\newcommand{\Hh}{\mathcal{H}}
\newcommand{\Oo}{\mathcal{O}}
\newcommand{\Pp}{\mathcal{P}}
\newcommand{\Ss}{\mathcal{S}}
\newcommand{\gbar}{\overline{g}}
\newcommand{\BC}{\mathbb{C}}
\newcommand{\BF}{\mathbb{F}}
\newcommand{\BL}{\mathbb{L}}
\newcommand{\BP}{\mathbb{P}}
\newcommand{\BR}{\mathbb{R}}
\newcommand{\BZ}{\mathbb{Z}}
\newcommand{\frakv}{\mathfrak{v}}
\newcommand{\sq}{\square}
\newcommand{\bsq}{\blacksquare}
\author{Eugene Gorsky \and Andr\'as N\'emethi}
\address{
Eugene Gorsky,
Mathematics Department, Columbia University\\
2990 Broadway, New York, NY 10027 USA}
\address {Department of Mathematics, UC Davis, One Shields Ave\\ Davis, CA 95616}
\address {International Laboratory of Representation Theory and Mathematical Physics\\
NRU-HSE, 7 Vavilova St.\\
Moscow, Russia 117312}
\email{egorsky@math.columbia.edu}
\address{
Andr\'as N\'emethi,
 Alfr\'ed R\'enyi Institute of Mathematics,
Hungarian Academy of Sciences\\
Re\'altanoda utca 13-15, H-1053, Budapest, Hungary}
\email{nemethi.andras@renyi.mta.hu }
\title{Lattice and Heegaard Floer homologies of algebraic links}
\begin{document}

\maketitle

\begin{abstract}
We compute the Heegaard Floer link homology of algebraic links in terms of the multivariate Hilbert function of the corresponding plane curve singularities.
The main result of the paper identifies four homologies:
(a)  the
Heegaard Floer link homology of the local embedded link,
(b) the lattice homology associated with the Hilbert function, (c) the  homologies of the
projectivized complements of local hyperplane arrangements cut out from the local algebra,
and
(d) a generalized version of the Orlik--Solomon algebra of these local arrangements.   In particular, the Poincar\'e
polynomials of all these homology groups are the same, and we also show that they agree with the
coefficients of the motivic Poincar\'e  series of the singularity.



\end{abstract}

\section{Introduction}

Complex analytic/algebraic plane curve singularities provide interesting connections between
analytic theory of  singularities and low dimensional topology, in particular, knot theory.
The rigidity properties of algebraic links help to compute the topological invariants
via analytic methods, while knot theory provides topological characterizations
for certain analytic
invariants (see e.g. \cite{book,EN,Milnor} and references therein).
E.g., in \cite{cdg} Campillo, Delgado and Gusein-Zade related the multi-variable Alexander polynomial of an
algebraic link to the multi-dimensional semigroup of the divisors of analytic functions.
They also identified the coefficients of the Alexander polynomial
with the Euler characteristics of certain
 projectivized hyperplane arrangement  complements associated with the ring of functions.
In this paper, we prove a ``homological lift'' of their theorem by {\it identifying
the Heegaard Floer link homology of the local analytic  link with
the homology of these hyperplane arrangements, and providing a concrete and computable
description of them in terms of classical singularity invariants of algebraic links (Hilbert
function, or Alexander polynomial)}.

Usually, the identification of the Heegaard Floer link homology $HFL^-$ is very hard, and very few concrete
examples are known. For $L$--space links we propose a   strategy,
which makes a conceptual simplification,  however at this generality this strategy
is also obstructed seriously at several points.
The strategy provides  a spectral sequence converging to  $HFL^-$, whose $E_2$ term is a
lattice cohomology associated with certain weights, which are determined by the
Alexander polynomial. 
But for a general $L$--space link the collapse of the spectral sequence is not guaranteed.

However, for algebraic links we eliminate all these obstructions as follows. Firstly, in \cite{gn}
we proved that algebraic links are $L$--space links, hence the strategy runs.
Then, we identify the $HFL$--weights
needed for the $E_2$  (lattice cohomology) term with the values of the
Hilbert  function of the local algebra (where the multi--filtration is given by valuations induced by the
normalization).
For this we need an `analytic inversion' formula, which provides the Hilbert function
from the Alexander polynomial.

This Hilbert function is the central singularity invariant,
it has a rich structure which will be exploited deeply. Based on this,
we analyze the properties
of the lattice cohomology (defined in \cite{Nemethi08} in a very general setup)
associated with the Hilbert function weights, and we show that
it is isomorphic to the cohomology of certain hyperplane arrangements embedded in the
ring of functions. For this step we need to use and improve the Orlik--Solomon theory
of the cohomology of hyperplane arrangement complements.
The next step exploits the structure of Orlik--Solomon cohomology rings
(determined by the rigid matroid properties of our Hilbert function). We define a
bigrading on the Orlik--Solomon complex and prove a vanishing result which guarantees that the cohomology
is supported on a line (with respect to this bigrading).
This intrinsic structure and vanishing
will imply finally the collapse of the above mentioned spectral sequence involving  the $HFL^-$ theory
(showing that all the higher differentials endowed with the bigrading are
necessarily trivial).

The final  picture identifies the ranks of the following four graded homologies:
\begin{itemize}
\item[(a)] The Heegaard Floer link homology of the local embedded link of the germ,
\item[(b)] The local lattice homology associated with the Hilbert function,
\item[(c)] The (simplicial) homologies of the projectivized complements of local hyperplane arrangements cut out from the local algebra by
valuations given by the  normalizations of irreducible components,
\item[(d)] A generalized  version of the Orlik--Solomon algebra of these local arrangements.

\end{itemize}
In particular, the Poincar\'e polynomials of all these homology groups are the same, and we also show that they agree with the
coefficients of the motivic Poincar\'e  series of the singularity germ \cite{cdg3,intfun,mozu}.
Since the homologies have no $\BZ$-torsion, the corresponding
Poincar\'e polynomials provide the complete description of the corresponding  homologies.

 It is important to mention that  the above isomorphisms are defined separately for each
 Alexander grading, which belongs to the lattice $\BZ^r$
(where $r$ is the number of components of a link). For each lattice point $v\in \BZ^r$ we define a separate topological space $\Hh(v)$ (which is either empty or a complement to a hyperplane arrangement) and relate its homology to $\HFL^-(v)$. This recovers  $\HFL^-=\bigoplus_{v}\HFL^-(v)$ as a $\BZ^r\oplus\BZ$--graded vector space (for a comment
regarding coefficients, see Remark \ref{remark:warning}). 
Here the last $\BZ$-grading is the homological grading.
(All other homologies in the above list (a)--(d) are graded similarly.) 

Some of the important structures present in $\HFL^-$ are not immediately recovered with this approach. In particular, the Heegaard Floer theory defines operators $U_1,\ldots,U_r$ which act on $\HFL^-$ and shift the Alexander grading in various directions. It seems plausible that the action of $U_i$ is determined by the Hilbert function too, but we do not  study this action in the present paper -- such a study would require a comparison of spaces $\Hh(v)$ for different $v$.

In order to realize the above program, we need to recall/improve several properties
of Heegaard Floer link homology of $L$--space links in section \ref{s:La} and of
local algebraic curve singularities (e.g. how to invert the Alexander polynomial  to the
Hilbert function) (section \ref{s:Hil}), to develop the theory of lattice cohomology (associated with the Hilbert function weights) (section \ref{s:lattice}), and to adjust and improve the theory of Orlik--Solomon algebras (section \ref{s:Arran}).
Based on all these we finish the main proof in  section \ref{s:L}. Finally, in section \ref{NewExample} we explicitly compute the homologies (a)-(d)
for the Hopf link, corresponding to the singularity $\{xy=0\}$.

\subsection{}
The next subsections provide more details on the involved  invariants and identifications (for the precise definitions
and statements see
the next sections).

Trough the paper the following notations will be used.
The number of link components will be denoted by $r$.
 Set $K_0=\{1,\ldots,r\}$.
Let $e_i$ denote the $i$-th coordinate vector in $\BZ^r$.
For a subset $K\subset K_0$ we
write $e_{K}=\sum_{i\in K}e_{i}$ and $e=e_{K_0}=\sum e_i$.
Given $v\in \BZ^r$, we define $v_{K}=\sum_{i\in K}v_ie_i$.
$|K|$ denotes the cardinality of $K$.
We set a partial order on $\BZ^r$ by
$$
u\preceq v\ \Longleftrightarrow \ u_i\le v_i \ \ \ \mbox{for all $i$}.
$$
\subsection{The Hilbert series and the related singularity `package'}
Let $(C,0)=(\cup_{i=1}^{r}C_i,0)$ be a reduced plane curve singularity at the origin in $\BC^2$, where
 $C_i$ are the irreducible components. Let $\gamma_i:(\BC,0)\rightarrow (C_i,0)$ be the
  normalization of the components.
We consider  $r$ valuations on the $\BC$--algebra
$\Oo=\Oo_{\BC^2,0}$ defined by
$
\frakv_i(f)=\Ord\left(f\left(\gamma_i(t)\right)\right)
$,
and a $\BZ^r$-indexed filtration
$$
J(v)=\{f\in \Oo\ |\ \frakv_{i}(f)\ge v_i\ \mbox{for all $i$}\}.
$$
The Hilbert function $h:\BZ^r\to \BZ$ is defined by  $h(v)=\dim \Oo/J(v)$, while the multivariable
Hilbert series by $H(t)=\sum_{v} h(v)t_1^{v_1}\cdots t_r^{v_r}$, cf.  \ref{def:Hil}.  It guides most of the classical
analytic and topological invariants of the germ. For example, the multivariable Poincar\'e series
satisfies  $P(t)=-H(t)\cdot\prod_i(1-t^{-1}_i) $. By
\cite{cdg} $P(t)$ is related to  the
multivariable Alexander polynomial $\Delta(t)$ as follows: $\Delta(t) =P(t)$ if $r>1$,
 while  $\Delta(t)=(1-t)P(t)$ for $r=1$.
 This shows that $\Delta(t)$ is determined by the Hilbert series $H(t)$.
 We prove an `Inversion Theorem' \ref{reconst} providing an explicit way to recover $H(t)$ from $\Delta(t)$.
 (This explicit formula  can be used to define an analogue of $H(t)$ for any non-algebraic link as well; this
 plays an important role in the study of $L$-space links in Heegaard Floer link theory:
 it produces the weights of the lattice complex whose lattice cohomology is the
 $E_2$ term of the spectral sequence, cf. Theorem \ref{thm: l space}.)

Another objects determined by the valuations are the topological spaces
$$
\Hh(v):=\{f\in \Oo\ |\ \frakv_i(f)=v_i\ \mbox{\rm for all}\ i\}
$$
and their projectivizations $\BP\Hh(v)$. Although $\Hh(v)$ and
$\BP\Hh(v)$ are infinite-dimensional,
 they can be projected onto finite-dimensional varieties with affine fibers.
Furthermore, $\Hh(v)=J(v)\setminus \cup_i J(v+e_i)$ (where $e_i$ are the base vectors), hence
 $\Hh(v)$ is either empty or a complement of  a central hyperplane arrangement,
see section \ref{ss:ARR}.  It turns out that the Euler characteristic of $\BP\Hh(v)$ is exactly
the coefficient $\pi_v$ of $t^v$ in the Poincar\'e series $P(t)$. Replacing the Euler characteristic $\pi_v$
by the Poincar\'e polynomial $\pi_v(q)$
of the homology of  $\BP\Hh(v)$ (or by the class of $\BP\Hh(v)$ in the Grothendieck ring of algebraic
varieties), we obtain the `motivic Poincar\'e series' $\Pp(t;q)=\sum_v \pi_v(q)t^v$ \cite{cdg3,intfun,mozu}.

\subsection{The Orlik--Solomon theory} To describe the homology of $\Hh(v)$, we
need some facts from the theory of hyperplane arrangements.
Let $\{H_1,\ldots, H_r\}$ be a collection of hyperplanes in a complex vector space $V$. Brieskorn in \cite{bries} proved  that the de Rham cohomology of the complement $\Hh:=V\setminus \cup_{i} H_{i}$ is generated as an algebra by the classes of 1-forms $z_i=\frac{d\ell_i}{\ell_i}$, where $\ell_i$ are the defining equations of $H_i$. Orlik and Solomon \cite{or1} gave an explicit combinatorial description of the ideal of relations between $z_i$ in terms of
linear dependencies between $\ell_i$ (see  \ref{ss:4.2}).
To connect the Orlik-Solomon theory with the $\BZ$--module structure of the lattice and $HFL^-$ cohomologies,
we prove  the following improvement of their result (see Theorem \ref{homology of d0}).

\begin{theorem}
Consider the free anticommutative algebra $\Ee$ generated by $z_i$. It is naturally bigraded: a monomial $\wedge_{i\in K}z_i$ has bidegree
$(|K|,\rho(K))$, where $\rho(K):=\dim V/\cap_{i\in K}H_{i}$.
\begin{itemize}
\item[(a)] There is a differential $\partial_0$ on $\Ee$ of bidegree $(-1,0)$ such that $H_{*}(\Ee,\partial_0)\simeq H_{*}(\Hh)$.
\item[(b)] There is a differential $\partial_U=\partial_0+U\partial_1$ on $\Ee[U]$ such that $H_{*}(\Ee[U],\partial_U)\simeq H_{*}(\BP\Hh)$.
\item[(c)] All classes in the homology of $\partial_U$ have $U$-degree $0$ and can be presented as sums of
    monomials $\alpha=\wedge_{i\in K}z_i$ such that the hyperplanes
    $H_{i\in K}$  are independent.
\end{itemize}
\end{theorem}

\begin{corollary}
\label{bidegree collapse} (a)
The homology of $\partial_0$ or $\partial_U$ inherits a bidegree, and for the nontrivial
generators  $|\alpha|=\rho(\alpha)$. Therefore,
the bidegrees in non-trivial  homology elements lie on a line.

(b) The $U$--action on $H_{*}(\Ee[U],\partial_U)\simeq H_{*}(\BP\Hh)$ is trivial.
\end{corollary}

\subsection{The lattice homology}\label{ss:LatHom}
This note introduces the lattice homology of $(C,0)$.
Recall that in \cite{Nemethi08}
the lattice homology of a normal surface singularity was introduced via
the lattice provided by its  resolution graph (or plumbing graph of the link).
That invariant created a bridge between the analytic  invariants of the surface
singularity and  several topological invariants (like  Seiberg--Witten invariant and Heegaard Floer homology) of its 3--dimensional link.
The goal of the present construction is similar; nevertheless here we rely on the lattice $\BZ^r$
discussed above, and
the needed weight function is provided by the normalization of $C$, namely
by  the Hilbert function $h(v)$.

In short, the definition  for an arbitrary weight function $w:\BZ^r\to \BZ$ runs as follows.
The lattice complex $\Ll^{-}_w$
is generated over $\BZ[U]$ by cubes $\sq$ of all dimensions in
$\BR^r$, with vertices in the lattice $\BZ^r$.
For  such a cube we define $w(\sq)=\max_{x\in \sq\cap{\BZ}^r}w(x)$.
The differential is defined as
$$\partial_{U}(U^m\sq)=U^m\cdot \sum_{i}\varepsilon_i U^{w(\sq)-w(\sq_i)}\sq_i,$$
where $\sq_i$ are the
oriented boundary cubes of $\sq$, and $\varepsilon_i$ are the corresponding signs
(as in the boundary operator of the classical cubic homology). We define the
{\it homological degree } of the generators by
$\deg (U^m\sq)=-2m+\dim (\sq)-2w(\sq)$;
$\partial_U$ decreases it by one.

The complex $\Ll^{-}_w$ is naturally $\BZ^{r}$-filtered:
the subcomplex $\Ll^{-}_w(v)$ is generated by the cubes contained in
the positive quadrant originating at $v$. One of our main theorems describes
the homology of the subcomplexes $\Ll^{-}_w(v)$ and the associated graded complexes $\gr_v \Ll^{-}_w$
for all $v$.

\begin{theorem}
\label{homology of L}
\begin{enumerate}
\item[(a)] If $w$ is non-decreasing (that is, $w(v)\leq w(u)$ for $v\preceq u$), then
 $H^{*}(\Ll^{-}_w(v))\simeq \BZ[U]$ with a generator of homological degree $-2w(v)$.
\item[(b)] In the algebraic case (that is, if $w=h$), the Poincar\'e polynomial $P_{\gr_v \Ll_h^{-}}(t)$
of the homology of $\gr_v \Ll^{-}_h$ agrees with the $v$--coefficient in the motivic Poincar\'e series:
$$P_{\gr_v \Ll^{-}_h}(-t^{-1})=t^{h(v)}\pi_{v}(t).$$
\item[(c)] The following (co)homologies are isomorphic:
$$H_{-2h(v)-*}(\gr_v\Ll^-_h)\simeq
H^*(\BP\Hh(v)),$$
and both spaces are free $\BZ$-modules.
\item[(d)] The induced $U$--action on $H_{*}(\gr_v\Ll^-_h)$ is trivial.
\end{enumerate}
\end{theorem}

We prove the parts of this theorem in Theorems \ref{homology of l(u)}, \ref{zlat} and \ref{th:Lat+OS}.
\subsection{Heegaard Floer link homology}
We relate the Heegaard Floer link homology  $\HFL^{-}$ of an algebraic link
to lattice homology of the corresponding plane curve singularity.
(For the definition of $\HFL^{-}$ see \cite{os,os2,os3,os4}  and  \cite{ras}).

Recall that an $L$-space is a 3-manifold with minimal possible rank of its Heegaard Floer homology,
and an $L$--space link is a link in $S^3$ such that a sufficiently large surgery of $S^3$
along its components yield an $L$--space.
Ozsv\'ath and Szab\'o proved in \cite{os} that the Heegaard Floer  homology of an $L$--space {\it knot} is determined by its Alexander polynomial.
 Hedden proved in \cite{hedden} than every algebraic {\it knot} is an $L$--space knot. As a consequence,  Heegaard Floer  homology of an algebraic knot is determined by its Alexander polynomial.
However, for $L$--space {\it links} is not known if their Heegaard Floer link homology is determined by the
multivariable Alexander polynomial.

As a generalization of the above  facts valid for knots,
we propose the following program.  First, in
 \cite{gn} (motivated by the present manuscript), the authors observe
 that all algebraic links are $L$-space links. Then, by the general theory of Ozsv\'ath and Szab\'o
 of $L$--space links and by `Large Surgery Theorem'
  of Manolescu and Ozsv\'ath \cite{mo}, such a link $L\subset S^3$
  provides a function $g:\BZ^r\to \BZ$ as follows (we call it
 $HFL$--weight function).  The $HFL^-$ complex is a $\BZ[U_1,\ldots,U_r]$
  module with Alexander filtration $\{\Cc(v)\}_{v\in\BZ^r}$,
  where $U_i(\Cc(v))\subset \Cc(v+e_1)$. $\Cc(v) $ is a subcomplex and a $\BZ[U_1,\ldots,U_r]$ submodule.
 Its homology is a free rank one $\BZ[U]$-module (with $U=U_1$). Then   $g(v)$  is essentially the homological
 degree of its unique generator  (similarly to Theorem \ref{homology of L}(a)).  The function $g(v)$ is
 determined by the multi-variable Alexander polynomial  of $L$ (Theorem  \ref{th:NEWTH}).
We prove the following  (see Theorem \ref{thm: l space ss}).
\begin{theorem}
\label{thm: l space}
Let $L$ be an $L$--space link and  let $g:\BZ^r\to \BZ$ be its $HFL$--weight
function. Then for each fixed $v\in \BZ^r$ there exists a spectral
sequence with the following properties:
\begin{itemize}
\item[(a)]The $E^1$ page can be identified (as a $\BZ[U]$ module)
with the lattice complex $\gr_{v}\Ll_{g}^{-}$ associated with $g(v)$.
\item[(b)]The $E^2$ page is isomorphic (over $\BZ$)
to the local lattice homology associated with  $g(v)$.
\item[(c)]The $E^{\infty}$ page is isomorphic (as graded $\BZ$-module, where the grading is the homological one)
to $\HFL^{-}(L,v)$,
the Heegaard Floer link homology of $L$ with Alexander grading $v$.
Moreover,
the spectral sequence collapses at $E^r$ page (or earlier).
\item[(d)] If $r\leq 3$  then the spectral sequence collapses at the $E^2$ page.
\end{itemize}
\end{theorem}
For algebraic links the following additional facts hold
(cf. Theorems \ref{g equals h} and \ref {thm:alg l space}).

\begin{theorem}\label{last}
 If $L$ is the link of a plane curve singularity $(C,0)$ then the
$HFL$--weight function $g(v)$
coincides with the Hilbert function $h(v)$. Moreover,
the spectral sequence always collapses  at the $E^2$ page.
\end{theorem}
\begin{corollary}
If\, $L$ is the algebraic link of $(C,0)$ then for each fixed  $v\in \BZ^r$ one has
$$\HFL^{-}(L,v)\simeq H_{*}(\gr_{v} \Ll^{-}_h)\simeq H^{-2h(v)-*}(\BP\Hh(v))$$
(isomorphism of graded $\BZ$ modules). Moreover,
the Poincar\'e polynomial of the Heegaard Floer link homology is described by
Theorem \ref{homology of L} by the coefficients of the motivic Poincar\'e series.
Furthermore, $\HFL^{-}(L,v)\neq 0$ if and only if $v$ belongs to the semigroup of $(C,0)$.
\end{corollary}

Theorem \ref{thm: l space} can be compared with \cite[Theorem 1.1]{OSZSt2},
where a similar spectral sequence from a different form of lattice homology
(associated with a plumbing graph) to Heegaard Floer homology was considered.
 For the first part of  Theorem \ref{last} we
use the `Inversion Theorem' \ref{reconst}, and in the proof
of collapse we use some specific properties of the
  lattice homology and Orlik--Solomon algebras
  established in Theorem \ref{zlat} (cf. Corollary \ref{bidegree collapse}).

\section*{Acknowledgements}

The authors are grateful to S. Gusein-Zade, J. Fern\'andez de Bobadilla, M. Hedden, C. Manolescu, J. Moyano-Fern\'andez, P. Ozsv\'ath, J. Rasmussen and Z. Szab\'o for the useful remarks and discussions.
We also thank the Oberwolfach Mathematical Institute, where part of this work was done, for the hospitality. The article was finalized during the visit of A. N. at Princeton University.
The research of E. G. was partially supported by the grants RFBR-10-01-678, RFBR-13-01-00755, NSh-8462.2010.1, NSF grant DMS-1403560 and the Simons foundation.
A. N.  is partially supported by OTKA Grants 81203, 100796 and 112735.

\section{Heegaard Floer link homology}\label{s:La}

\subsection{Review of Heegaard Floer link homology}\
In this subsection we recall certain basic algebraic structures of Heegaard Floer link homology. For more
see \cite{mo,os,os2,os3,os4,ras}.

To every 3-manifold $M$ with fixed Heegaard splitting one can associate a {\em Heegaard Floer complex} $CF^{-}(M)$
of free $\BZ[U]$-modules. The operator $U$ has homological degree $(-2)$, and the differential $d$ has degree $(-1)$. This complex is not unique, but different choices
(e.g. of a splitting)
lead to quasi-isomorphic complexes. Therefore the homology of $CF^{-}(M)$ is an invariant of $M$ called
 {\em Heegaard Floer homology} and denoted by $HF^{-}(M)$. For example, $HF^{-}(S^3)=\BZ[U]$.

To a link $L=L_1\cup\cdots\cup L_{r}\subset S^3$ one can associate a $\BZ^r$-filtered complex of
$\BZ[U_1,\ldots,U_r]$-modules, denoted by $CFL^{-}(L)$.  If one ignores the filtration, then the complex is quasi-isomorphic
to the Heegaard Floer complex  $CF^{-}(S^3)$, where all the operators $U_i$ are homotopic to each other,
cf.  \cite{os3}. One can also
consider this complex as a $\BZ[U]$-module, where $U=U_1$.

However, the filtration (called Alexander filtration) captures nontrivial information about the link.
For $v\in \BZ^r$, we will denote the Alexander filtration by $\{\Cc(v)\}_v$.
Each  $\Cc(v)=(\oplus_\nu A^{-,\nu}(v),d)$
is a subcomplex of $CFL^{-}(L)$
(in \cite{mo} they are denoted  by $\mathfrak{A}^{-}(v)$)
\footnote{For a more transparent
  match with the algebraic picture, we reverse the sign of $v$, thus reversing the direction of the
  filtration as well.}.
 The upper index  $\nu$ denotes the homological (or Maslov) grading.
They satisfy
\begin{equation}\label{eq:INCL}\begin{array}{l}
\Cc(v)\supset  \Cc(u) \mbox{  \ for    $u\succ v$, \ and }\\
\Cc(v)\cap \Cc(u)= \Cc(\max\{v,u\}).
                               \end{array}
\end{equation}
 The subcomplexes $\Cc(v)$ are $\BZ[U_1,\ldots,U_r]$-submodules,
 the operators $U_i$ have homological degree $(-2)$ and are
homotopic to each other.  Moreover, $U_i(\Cc(v))\subset \Cc(v+e_i)$.

The Heegaard Floer link homology is defined as the homology of the associated graded pieces of $\Cc(v)$:
$$
\HFL^{-}(L,v):=H_{*}(\, (\gr\Cc)(v)\,), \ \ \mbox{where} \ (\gr\Cc)(v):=
\Cc(v)/\sum_{u\succ v}\Cc(u).
$$
For example, for $r=1$ one has $\HFL^{-}(L,v)=H_{*}(\Cc(v)/\Cc(v+1)).$

\begin{remark}\label{remark:warning}
At present, Heegaard Floer link homology is defined only for $\BF_2$ coefficients, hence, strictly speaking, all results of this section and the last section are valid only over $\BF_2$. Nevertheless, we believe that all the statements
are true over $\BZ$ as well, but the cautious reader might take everywhere $\BF_2$ instead of $\BZ$.
\end{remark}
By \cite[Proposition 9.2]{os3}, the Euler characteristic of the Heegaard Floer link homology
coincides with the Reidemeister torsion, and it satisfies
\begin{equation}\sum_{v\in \BZ^r}\
\chi(HFL^-(L,v))\cdot t^v=\begin{cases}
\Delta(t)& \text{if}\ r>1,\\
\frac{\Delta(t)}{1-t}& \text{if}\ r=1,
\end{cases}\end{equation}
where $\Delta$ is the multivariable Alexander polynomial of $L$.


\subsection{$L$-space links}

In \cite{os} Ozsv\'ath and Szab\'o  introduced the notion of an $L$--space:  a rational homology 3--sphere
$M$ is an $L$--space if for any ${\rm spin}^c$--structure $s$ one has $\rank \widehat{HF}(M,s)=1$
(or, equivalently, $HF^-(M,s)$ is a free $\BZ[U]$--module of rank 1).

\begin{definition}
A link $L\subset S^3$ is called an {\it $L$-space link}
 if a sufficiently large surgery on all of its components is an $L$-space.
\end{definition}

The following `Large Surgery Theorem'  shows the importance of the $L$-space property.

\begin{theorem}(\cite[Theorem 10.1]{mo}, see also \cite[Lemma 4.2]{OSZSt2})
\label{th:lspace}
If $d_1,\ldots,d_r$ are sufficiently large integers, then the homology of $\Cc(v)$ (considered as $\BZ[U]$-module)
is isomorphic to the Heegaard Floer homology $HF^-$ of the 3-manifold
$S^3_{d_1,\ldots,d_r}(L)$ obtained from $S^3$ by $d_i$-surgery along the components of the link $L_i$
(for a certain ${\rm spin}^c$--structure depending on $v$).

In particular, if $L\subset S^3$ is an $L$-space link, then for any $v\in \BZ^r$ the homology of\,
$\Cc(v)$ is a free $\BZ[U]$--module of rank 1.
\end{theorem}

Let $\gbar(v)$ denote the homological degree of the unique generator in $H_{*}(\Cc(v))$.

\begin{lemma}
\label{lem:gbar}
For all $i$ and $v\in \BZ^r$ either $\gbar(v+e_i)=\gbar(v)$ or $\gbar(v+e_i)=\gbar(v)-2$.
Furthermore, the inclusion map $\Cc(v+e_i)\hookrightarrow \Cc(v)$ induces an injection on homology.
\end{lemma}

\begin{proof}
 One has the following inclusions:
\begin{equation}\label{eq:Inc}
\Cc(v)\supset \Cc(v+e_i)\supset U_i\Cc(v)\supset U_i\Cc(v+e_i).
\end{equation}
By Theorem \ref{th:lspace}, $H_{*}(\Cc(v)/U_i\Cc(v))$ and $H_{*}(\Cc(v+e_i)/U_i\Cc(v+e_i))$ are free $\BZ$-modules of
rank 1 with generators of homological degrees $\gbar(v)$ and $\gbar(v+e_i).$
Similarly to \cite[Lemma 3.2]{os}  (see also \cite{intfun}), from (\ref{eq:Inc}) one obtains the following alternative:
\begin{equation}
\label{alternative}
\begin{cases}
\gbar(v+e_i)=\gbar(v)\ \text{and}\  \dim H_{*}(\Cc(v)/\Cc(v+e_i))=0, \ \ \text{or}\\
\gbar(v+e_i)=\gbar(v)-2\  \text{and}\  \dim H_{*}(\Cc(v)/\Cc(v+e_i))=1.\\
\end{cases}
\end{equation}
The long exact sequence in the homology implies the injectivity of the inclusion.
\end{proof}

Motivated by Theorem \ref{homology of l(u)} (valid for algebraic links)
  we introduce the following definition.
\begin{definition}
We define the {\em $HFL$--weight function} of an $L$-space link by
$$g(v):=-{\textstyle{\frac{1}{2}}}\,\gbar(v).$$
\end{definition}

Note that by Lemma \ref{lem:gbar} the values of $\gbar(v)$ have the same parity for all $v$, hence
$g(v)\in\BZ$ or $g(v)\in\textstyle{\frac{1}{2}}+\BZ$ for all $v$, hence $g(v+e_i)-g(v)\in\{0,1\}$.

\begin{corollary}
\label{cor:injective}
For all $u\succeq v$ the inclusion $i_{uv}:\Cc(u)\hookrightarrow \Cc(v)$ induces an injective map on homology.
If $z(u)$ and $z(v)$ are generators in $H_{*}(\Cc(u))$ and in $H_{*}(\Cc(v))$ respectively, then
$$
i_{uv,*}(z(u))=U^{g(u)-g(v)}z(v).
$$
\end{corollary}

\begin{definition}
Consider the ``iterated cone'' complex
$$\Kk(v):=\bigoplus_{K\subset \{1,\ldots,r\}}\Cc(v+e_K),\ \ D=d+\sum_{i=1}^{r}\epsilon_{i,K}\partial_i,$$
where $d$ is a differential on $\Cc$,  $\partial_{i}:\Cc(v+e_K)\to \Cc(v+e_K-e_i)$ is the inclusion map ($i\in K$),
and the signs  $\epsilon_{i,K}=\pm 1$ are chosen such that $D^2=0$.
\end{definition}

It is useful to present $\Kk(v)$ as an $r$-dimensional cube with the
complexes $\{\Cc(v+e_K)\}_K$ at vertices. The differential $d$ acts in vertices, while $\partial:=\sum_{i=1}^{r}\epsilon_{i,K}\partial_i$
acts along the edges. The homological grading of a generator  $x\in A^{-,\nu}(v+e_K)$, considered as a generator in $\Kk(v)$, equals
$|K|+\nu$. The differential $d$ decreases $\nu$ by 1 and preserves $|K|$, the differential $\partial$ decreases $|K|$ by
 1 and preserves $\nu$,  so both decrease the total grading by 1.
\begin{lemma}
\label{lem: iterated cone}
The complexes $(\gr \Cc)(v)$ and $\Kk(v)$ are quasi-isomorphic.
\end{lemma}

\begin{proof}
We prove this by induction on $r$. For $r=1$ it is clear that $\Kk(v)$ is just the cone of the inclusion map $\Cc(v+e_1)\to \Cc(v)$,
so it is quasi-isomorphic to $(\gr \Cc)(v)=\Cc(v)/\Cc(v+e_1)$.

For $r>1$, we can write (using (\ref{eq:INCL}))
$$(\gr \Cc)(v)=\Big(\Cc(v)/\sum_{i=1}^{r-1} \Cc(v+e_i))\Big)\Big/\Big(\Cc(v+e_r)/\sum_{i=1}^{r-1} \Cc(v+e_i+e_r))\Big).$$
Each of these quotients can be realized as an iterated cone, and $(\gr \Cc)(v)$
can be realized as a cone of the natural map between them.
\end{proof}

The following theorem and its proof is similar to the main result of \cite{OSZSt2},
although it appears in a different setup.
The algebraic construction of the `iterated cone' complex $\Kk$ can be compared
 with the construction appearing in \cite[Theorem 4.3]{OSZSt2}.

For the definition of the lattice complex and cohomology see subsection
\ref{ss:3.1} and section \ref{s:lattice}.
\begin{theorem}
\label{thm: l space ss}
Let $L$ be an $L$-space link with $r$ components. Let us fix a point $v\in \BZ^r$.
There exists a spectral sequence with the following properties:
\begin{itemize}
\item[a) ]Its $E^2$ page is isomorphic (as graded $\BZ$ module)
to $H_{*}(\gr_v \Ll^{-}_{g})$, where $\Ll^{-}_{g}$ denotes the
lattice complex associated with the $HFL$--weight function $g(v)$.
\item[b)] Its $E^{\infty}$ page is isomorphic (as graded $\BZ$ module)
to $\HFL^{-}(L,v)$, the Heegaard Floer link homology of $L$
with Alexander grading $v$.
\item[c)] The spectral sequence collapses at $E^{r}$ page (or earlier).
\item[d)] If $L$ has three or less components, then the  spectral sequence collapses  at $E^2$.
\end{itemize}
\end{theorem}

\begin{proof}
One has two (anti)commuting differentials $d$ and $\partial$ on the complex $\Kk(v)$, hence there
exists a spectral sequence which starts
with the cohomology of $d$ and converges to the cohomology of $D=d+\partial$.
By Lemma \ref{lem: iterated cone}, its (multigraded) $E^{\infty}$ page is isomorphic to
$$
E^{\infty}(v)=H_{*}(\Kk(v),D)=H_{*}((\gr \Cc)(v))=HFL^{-}(L,v).
$$
On the other hand, by Theorem \ref{th:lspace}, the $E^{1}$ page of this spectral sequence is isomorphic to
$$
E^1(v)=H_{*}(\Kk(v),d)=\bigoplus_{K}H_{*}(\Cc(v+e_K))=\bigoplus_{K}\BZ[U]\cdot z(v+e_K),
$$
where, as above, we denote the generator in the homology of $\Cc(u)$ by $z(u)$.
One can naturally identify this $E^1$ page with the lattice complex $(\gr_v\Ll_{g}^{-},\gr \partial_{U})$, via the identification of
 $z(v+e_K)$ by $\square(v,K)$.
Note that the $\nu$--grading  of $z(v+e_A)$ (in $\Kk(v)$) equals $\gbar(v+e_K)=-2g(v+e_K)$, hence
the homological grading of $U^{m}z(v+e_K)$ equals
$
\nu(U^{m}z(v+e_K))+|K|=-2m-2g(v+e_K)+|K|,
$
in agreement with the definition of the homological degree in the lattice complex
in \ref{ss:3.1}, see also   \eqref{eq:HOMDEG1}.
 The next differential is induced by $\partial$, and by Corollary \ref{cor:injective} it agrees with the lattice differential
for the weight function $g(v)$.
Indeed,
$$
\partial(z(v+e_K))=\sum_{i\in K}\pm\partial_{i}(z(v+e_K))=\sum_{i\in K}\pm
U^{g(v+e_K)-g(v+e_K-e_i)}z(v+e_K-e_i).
$$

The differential $d_k$ in the spectral sequence decreases $|K|$  by $k$ and
increases the $\nu$-grading (homological grading in vertices of the cube) by $k-1$
(assuming that $d=d_0$ and $\partial=d_1$). In particular, for $k>r$ the differential $d_k$ vanishes automatically.
Moreover, the class of the unique $r$-dimensional cube is not in the kernel of the lattice differential,
so $d_r$ vanishes too.

Since the $\nu$-gradings of all classes on $E^1$ page has the same parity, $d_k$ can be nontrivial only if $k$ is odd.
In particular, for $r\le 3$ we have $d_2=d_3=0$, so $E^2=E^\infty$.
 \end{proof}

The next theorem expresses  the function $g(v)$ in terms of the Euler characteristic  of
the $HFL^-$ homology (or, equivalently, in terms of multivariable Alexander polynomial).


%

\begin{theorem}\label{th:NEWTH} Let $L_K$ denote the sublink associated with $K\subset K_0$.
Then for every $v\in \BZ^r$
$$
g(v)=\sum_{K\subset K_0}(-1)^{|K|-1}\sum_{0\preceq u\preceq v_K-e_{K}}\chi (HFL^-(L_K,u)).$$
\end{theorem}
\begin{proof}
Since the Heegaard Floer complex is finitely generated as $\BZ[U]$-module, there exists 
 $N=(N_1,\ldots,N_r)$ large enough such that $\Cc(v)\subset \Cc(-N)$ for any $v$. Hence
 $$g(v)=g(\max\{v,-N\}).$$
For a subset $K=\{i_1,\ldots,i_{|K|}\}\subset K_0$ consider a sublink $L_K:=\cup_{i\in K} L_{i}$.
 $L_K$ is also $L$--space link (cf. \cite[Lemma 1.6]{Liu}),
 so it defines a $HFL$-weight function $g_K$ on the sublattice of $\BZ^r$ supported on $K$.
 By \cite[Proposition 7.1]{os3}, the restriction of the filtration $\Cc(v)$ to this sublattice coincides with
the filtration on the Heegaard Floer complex for the sublink $L_K$.  Given $v_{i_1},\ldots,v_{i_{|K|}}$, define
$$
u(v_{i_1},\ldots,v_{i_{|K|}}):=\begin{cases}
v_j,\ & j\in K,\\
-N_j,\ & j\notin K,\\
\end{cases}
$$
then $\Cc(u(v_{i_1},\ldots,v_{i_{|K|}}))\simeq \Cc_{L_K}(v_{i_1},\ldots,v_{i_{|K|}})$ and
\begin{equation}
\label{restriction for g}
g(u(v_{i_1},\ldots,v_{i_{|K|}}))=g_K(v_{i_1},\ldots,v_{i_{|K|}}).
\end{equation}
At Euler characteristic level we obtain
 \begin{equation}
 \label{Euler characteristic for g}
\chi(HFL^-(L_{K},v))=\chi\Big(\Cc(v)/\sum_{i\in K}\Cc(v+e_i)\Big)=\sum_{M\subset K}(-1)^{|M|-1}g(v+e_{M}).
 \end{equation}
This is a linear system of equations for $g(v)$, and by
Theorem \ref{reconst} (where  $0$ should be replaced by $-N$) the function $g(v)$ is defined
uniquely by the equations \eqref{restriction for g}  and \eqref{Euler characteristic for g}  up to an overall shift.
\end{proof}


\section{The Hilbert function and its relation with other  invariants }\label{s:Hil}

In this section we discuss the connections between the multi-variable Alexander polynomial,
three series (Poincar\'e,   Hilbert and  motivic Poincar\'e),  and the semigroup associated with an
 isolated  plane curve  singularity. All the statements, except those which involve
 the Alexander polynomial, are valid for arbitrary (non necessarily plane) curve
 singularity germs. The Alexander polynomial, by its very essence, is an invariant of the embedded
 topological type (hence of the embedded link); in the algebraic case it
 connects the theory of links
 of $S^3$ with the above  algebraic invariants.

\subsection{The Hilbert series of the multi-index filtration}
We fix a local reduced
plane curve singularity with $r$ irreducible components $C_i$
and normalizations  $\gamma_i:(\BC,0)\to (C_i,0)$.
Set the valuations
$\frakv_i(f)=\Ord_t\left(f\left(\gamma_i(t)\right)\right)$ on  $\Oo=\Oo_{\BC^2,0}$,
and a $\BZ^r$-indexed filtration
$$
J(v)=\{f\in \Oo\ |\ \frakv(f)\succeq v\}.
$$
Note that the ideals $J(v)$ are defined for negative values of $v$ as well.
The filtration is decreasing:  if $u\preceq v$ then $J(u)\supset J(v)$.

\begin{definition}\label{def:Hil}
 The {\it Hilbert series of the multi-index filtration $J$} is
\begin{equation}
H(t_1,\ldots,t_r)=\sum_{v}\ h(v)\cdot t_1^{v_1}\cdots\, t_r^{v_r}\in
{\mathbb Z}[[t_1,t_1^{-1},\ldots, t_r,t_r^{-1}]],
\end{equation}
\end{definition}
\noindent where $h(v)=\dim_{\BC} \Oo/J(v)$.  Note that
\begin{equation}\label{eq:MAXh}
h(v)=h(\max\{v,0\}).
\end{equation}
 Hence $H$ is determined completely by
$H(t)|_{0\preceq v}:=\sum_{0\preceq v} h(v)t^v$.

\subsection{The Poincar\'e series} If $r=1$, then the Poincar\'e series
of the graded ring $\oplus_{v} J(v)/J(v+e_1)$ is   $P(t)=-H(t)(1-t^{-1})$. For general $r$, one
 defines the Poincar\'e series  similarly
\begin{equation}\label{eq:HPC}
P(t_1,\ldots,t_r)=-H(t_1,\ldots,t_r)\cdot \prod_i(1-t_i^{-1}).
\end{equation}
This means that the  coefficient $\pi_v$ of $
P=\sum_{v}\pi_v\cdot t_1^{v_1}\ldots t_r^{v_r}$ satisfies
\begin{equation}
\label{htopi}
\pi_{v}=\sum_{K\subset K_0}(-1)^{|K|-1}h(v+e_{K}).
\end{equation}
The space ${\mathbb Z}[[t_1,t_1^{-1},\ldots, t_r,t_r^{-1}]]$
is a module over the ring of Laurent power series, hence
the multiplication in \eqref{eq:HPC} is a well-defined. 
 One can check (using e.g. \eqref{eq:MAXh}) that the right hand side of \eqref{eq:HPC}
is a power series involving only nonnegative powers of $t_i$.

\subsection{Poincar\'e series and the Alexander polynomial}\label{ss:PA}
The topological aspect and importance of the Poincar\'e series is shown by the following theorem.
\begin{theorem}[\cite{cdg,cdg2}]
\label{Poincare vs Alexander}
Let $\Delta(t_1,\ldots,t_r)$ be the multi-variable Alexander polynomial of the link of $C$.
If \,$r=1$ then
 $P(t)(1-t)=\Delta(t)$,
while  $
P(t_1,\ldots,t_r)=\Delta(t_1,\ldots,t_r)
$  if \,$r>1$.
\end{theorem}

The Alexander polynomial is symmetric in the following sense.
For any $i\in K_0$ let
$\mu_{i}$ and $\delta_i$ (respectively $\mu(C)$ and $\delta(C)$)
be the Milnor number and the delta invariant  of $C_{i}$ (respectively of $C$), see \cite{book,Milnor}.
Let $(C_j,C_i)$ be the intersection multiplicities at 0 $(j\not=i)$.
Then, cf. \cite{Milnor}, $\mu_i=2\delta_i$, and $\mu(C)+r-1=2\delta(C)$.
Define $l=(l_1,\ldots,l_r)$ by  $$l_{i}=\mu_{i}+\sum_{j\neq i}(C_{j},C_{i}) \ \ \ \ \mbox{($1\leq i\leq r$)}.$$
Then $\Delta(t^{-1})=t^{-\mu(C)}\Delta(t)$ for $r=1$, and (e.g. by \cite{EN})
$$\Delta(t_1^{-1},\ldots,t_{r}^{-1})=
\left(\prod t_{i}^{1-l_{i}}\right)\cdot \Delta(t_1,\ldots,t_r) \ \ \  \ \mbox{for \,$r>1$}.$$
By
\cite{cdk,mozu},
the Hilbert function also satisfies similar symmetry properties
\begin{equation}
\label{sym}
h(l-v)-h(v)=\delta(C)-|v|,
\end{equation}
where $|v|=\sum_{i=1}^{r} v_i$. In particular, for
$v\succeq l$ one has
\begin{equation}\label{eq:NEWNEW}
h(v)=|v|-\delta(C).\end{equation}

\subsection{The equivalence of the Poincar\'e series and the Hilbert series}
For any subset $K=\{i_1,\ldots,i_{|K|}\}\subset K_0$, $K\not=\emptyset$,
 consider the curve $C_{K}=\cup_{i\in K}C_{i}$. As above, this germ defines
 the Hilbert series $H_{C_K}$ of $C_K$ in
 variables $\{t_i\}_{i\in K}$:
 $$H_{C_K}(t_{i_1},\ldots,t_{i_{|K|}})=\sum_{v}
  h^{K}(v)\cdot
 t_{i_1}^{v_{i_1}}\ldots t_{i_{|K|}}^{v_{i_{|K|}}}.$$
 By the very definition,
$H_{C_K}(t_{i_1},\ldots,t_{i_{|K|}})=H_C(t_1,\ldots,t_r)|_{t_i=0\ i\not\in K}$; or
\begin{equation}
\label{hrestriction}
\mbox{if  $v_i=0$ for all $i\notin K$, then} \ \ h^{K}(v)=h(v).
\end{equation}
Analogously, we also consider  the  Poincar\'e series of $C_K$:
$$P_{C_K}(t_{i_1},\ldots,t_{i_{|K|}})=\sum_{v}
 \pi^{K}_{v}\cdot
t_{i_1}^{v_{i_1}}\ldots t_{i_{|K|}}^{v_{i_{|K|}}}.$$
By definition, for $K=\emptyset$ we take $\pi^{\emptyset}_{v}=0.$

By \cite{Yamamoto} the multi-variable Alexander polynomial
(and hence by Theorem \ref{Poincare vs Alexander}
the Poincar\'e series $P(t)$) determines the embedded topological type of
$C$, in particular all the series  $\{P_{C_K}\}_{K\subset K_0}$. Nevertheless,  the
reduction  procedure from $P$ to $P_{C_K}$ is more complicated than
the analogs of (\ref{hrestriction}) valid for the Hilbert series.
Indeed, these formulae  are of type (see \cite{torres}):
 \begin{equation}\label{eq:redP}
P_{C_{K_0\setminus \{1\}}}(t_2,\ldots,t_r)=
P(t_1,\ldots,t_r)|_{t_1=1}\cdot \frac{1}{(1-t_2^{(C_1,C_2)})\cdots (1-t_r^{(C_1,C_r)})}.
\end{equation}

The next theorem inverts  (\ref{htopi}): we  recover $H$
from $P$.
The fact that $H$ can be recovered from $P$ was already proved in
 \cite[Corollary 4.3]{julioproj}. However, we wish to present a more general statement
 which also clarifies under what condition the inversion works, and which is applied for
certain coefficients provided by the Heegaard Floer link homology as well, cf.
Theorem \ref{thm: l space ss} and identity (\ref{Euler characteristic for g}).

\begin{theorem}\label{reconst}  Consider
$G(t_1,\ldots,t_r)=\sum_{v}t_1^{v_1}\ldots t_r^{v_r}\cdot g(v)
\in \BZ[[t_1,t_1^{-1},\ldots, t_r,t_r^{-1}]]$
with the following properties:

(a) $g(v)=g(\max\{v,0\})$;

(b)   $g(0)=0$.

(c) Fix $K\subset K_0$. We extend any  $v=(v_{i_1},\ldots, v_{i_{|K|}})$
to a vector with entries indexed by $K_0$ such that the entries indexed by $K_0\setminus K$ are zero.
(In this way $g(v)$ make sense.)
Then, we also require that the coefficients of $g$ satisfy (for any $K$) the following identities:
\begin{equation*}
\pi^K_{v}=\sum_{M\subset K}(-1)^{|M|-1}g(v+e_{M})
\ \ \ \ \ \mbox{for any  $v=(v_{i_1},\ldots, v_{i_{|K|}})$}.
\end{equation*}

Then $G$ is uniquely determined by $\{P_{C_K}\}_K$ (hence by $P$ too), and it satisfies
\begin{equation}
\label{hilbert}
G(t_1,\ldots,t_r)|_{0\preceq v}
=\frac{1}{ \prod_{i=1}^{r}(1-t_i)}\sum_{K\subset K_0}(-1)^{|K|-1}
\Big(\prod_{i\in K}t_i\Big)\cdot P_{C_K}(t_{i_1},\ldots,t_{i_{|K|}}).
\end{equation}
\end{theorem}
\begin{proof}
The identity (\ref{hilbert}) is equivalent to the following identity  of the coefficients:
\begin{equation}
\label{pitoh}
g(v)=\sum_{K\subset K_0}(-1)^{|K|-1}\sum_{0\preceq u\preceq v_K-e_{K}}\pi^{K}_{u}.
\end{equation}
We will prove the identity  (\ref{pitoh}) by a two-step induction:
the first induction is by the number of components $r$,
and the second one (for fixed $r$) is over the norm $|v|=\sum v_i.$

If $r=1$, then  (d) implies $\pi_{v}=g(v+1)-g(v).$
Hence $\sum_{0\le u\le v-1}\pi_{u}=g(v)$ since $g(0)=0$.

 Let us prove  (\ref{pitoh}) for the case when at least one of coordinates $v_i$ vanish.
We can assume that $v_r=0$. By \eqref{hrestriction} and the induction assumption  we get
$$
g(v)=g(v_1,\ldots,v_{r-1},0)=\sum_{K\subset \{1,\ldots, r-1\}}(-1)^{|K|-1}\sum_{0\preceq u\preceq v_{K}-e_{K}}\pi^{K}_{u}.
$$
On the other hand, in  (\ref{pitoh})
for all $K\subset K_0$ with $r\in K$ we get the vacuous restriction $0\le u_r\le -1$, hence
we get a nontrivial contribution only from terms with $K\subset \{1,\ldots, r-1\}$.

 Suppose now that $v$ has no vanishing coordinates and that we already proved (\ref{pitoh})
for  $v-e_{K}$ for all non-empty subsets $K\subset K_0$.
We can rewrite (d) as a linear equation on $\{g(v-e_K)\}_K$:
$$\pi_{v-e}=\sum_{K\subset K_0}(-1)^{r-|K|-1}g(v-e_{K}).$$
By the induction assumption for $K\neq \emptyset$ we have
$$g(v-e_{K})=\sum_{M\subset K_0}(-1)^{|M|-1}\sum_{0\preceq u\preceq
(v_{M}-e_{K\cap M}-e_{M})}\pi^{M}_{u},$$
and we should establish the same identity for $K=\emptyset$.
Therefore we need  to prove that
\begin{equation}
\label{subst}
\pi_{v-e}=\sum_{K\subset K_0}\sum_{M\subset K_0}(-1)^{r-|K|+|M|}
\sum_{0\preceq u\preceq (v_{M}-e_{K\cap M}-e_{M})}\pi^{M}_{u}.
\end{equation}
Let us fix $M$ and $u\preceq v-e$ and sum the expression $(-1)^{|K|}$
over all sets $K\subset K_0$  such that $u_i\le v_i-2$ for $i\in K\cap M$. This sum vanishes  unless
 $M=K_0$ and $u_i=v_i-1$ for all $i$, when it is 1.
This proves (\ref{subst}).
\end{proof}
\begin{corollary}
\label{cor:reconst}
(a) The Hilbert series satisfies the assumptions of
the above inversion theorem, hence $G=H$.

(b) The restricted Hilbert function  $H(t)|_{0\preceq v}$
of a multi-component curve is a rational function with denominator $\prod_{i=1}^{r}(1-t_i)^2$.
\end{corollary}
\begin{proof}
For (a) use identities \eqref{hrestriction} and \eqref{htopi} applied for $C_K$, while for (b)
Theorems \ref{reconst} and \ref{Poincare vs Alexander}. \end{proof}
\begin{remark}\label{remark:new}
Let us reprove the identity \eqref{eq:NEWNEW} using
(\ref{pitoh}). We analyze the different contributions.
For $K=\{i\}$ we have $\sum_{0\leq u_i\leq v_i-1}\pi^K_u=v_i-\delta(C_i)$.
For $K=\{i,j\}$ (since $P_{C_K}$ is a polynomial) we have
$\sum \pi^{\{i,j\}}_u=P_{C_K}(1,1)$. This equals $(C_i,C_j)$ by \eqref{eq:redP}.
By similar argument, for $|K|>2$ the contribution is zero.
Hence
$h(v)=\sum_i (v_i-\delta(C_i))-\sum_{i\not= j}(C_i, C_j)=|v|-\delta(C).$
\end{remark}

\subsection{The semigroup of $C$.} Important information about the  algebraic curve $C$
is coded in its  semigroup.
It is defined as $\Ss:=\{v\in \BZ^r\ |\ \mbox{there exists $f\in \Oo$
with $\frakv(f)=v$}\}$.
\begin{lemma}
\label{eq:semi}
The semigroup can be equivalently defined by the following condition:
$$
\Ss=\{v\in \BZ_{\geq 0}^r\ |\ h(v+e_i)>h(v) \ \ \mbox{for every \ $i=1,\ldots, r$}\}.
$$
Next, fix any $0\preceq v$ and $e_i$. Then  $h(v+e_i)=h(v)+1$ if there
is an element $u\in \Ss$  such that
$u_i=v_i$ and  $u_j\ge v_j$ for $j\neq i$. Otherwise $h(v+e_i)=h(v).$

In particular, $H$ and $\Ss$ determine each other.
\end{lemma}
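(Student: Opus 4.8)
The plan is to unpack the definition $h(v)=\dim\Oo/J(v)$ and work purely with the filtration. First I would record the trivial inclusions: for any $v$ and any $i$ we have $J(v)\supseteq J(v+e_i)$, and hence $h(v)\le h(v+e_i)$; moreover, passing from $v$ to $v+e_i$ can only cut out those $f$ with $\frakv_i(f)=v_i$ (and $\frakv_j(f)\ge v_j$ for $j\ne i$), so the jump $h(v+e_i)-h(v)$ equals $0$ or $1$ — it is the codimension of $J(v+e_i)$ in $J(v)$, and since $\frakv_i$ takes a single value on each nonzero coset this codimension is at most one. (For $v$ with some negative coordinate, $J(v)=\Oo$ forces $h(v)=0$, consistent with everything below; this also shows why the description of $\Ss$ uses ${\mathbb Z}_{\ge0}^r$, since $\frakv(f)\succeq 0$ always.) This gives the ``otherwise'' clause essentially for free and reduces the whole lemma to identifying \emph{when} the jump is $1$.

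Next I would prove the sharpened jump criterion: fix $0\preceq v$ and $i$; then $h(v+e_i)=h(v)+1$ if and only if there exists $u\in\Ss$ with $u_i=v_i$ and $u_j\ge v_j$ for all $j\ne i$. For the ``if'' direction, take $f$ with $\frakv(f)=u$; then $f\in J(v)\setminus J(v+e_i)$, so the inclusion is proper and by the codimension-one bound above the jump is exactly $1$. For the ``only if'' direction, if the jump is $1$ choose $f\in J(v)\setminus J(v+e_i)$; then $\frakv(f)\succeq v$ but $\frakv_i(f)<v_i+1$, i.e. $\frakv_i(f)=v_i$, so $u:=\frakv(f)\in\Ss$ has the required form. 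The statement for $\Ss$ itself, $\Ss=\{v\in{\mathbb Z}_{\ge0}^r : h(v+e_i)>h(v)\text{ for all }i\}$, then follows: if $v\in\Ss$, pick $f$ with $\frakv(f)=v$; for each $i$ this $f$ lies in $J(v)\setminus J(v+e_i)$, so all $r$ jumps are strict. Conversely, if all $r$ jumps at $v$ are strict, apply the criterion with $u=v$ itself available in each coordinate — more carefully, strictness of the $i$th jump gives $u^{(i)}\in\Ss$ with $u^{(i)}_i=v_i$ and $u^{(i)}_j\ge v_j$ for $j\ne i$, and one must upgrade these $r$ elements to a single element of $\Ss$ equal to $v$.

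The main obstacle is exactly that last upgrading step, which is where the ring structure of $\Oo$ (not merely the abstract filtration) enters. The standard device is a genericity/linear-combination argument: given $f_1,\dots,f_r\in\Oo$ with $\frakv_i(f_i)=v_i$ and $\frakv_j(f_i)\ge v_j$ for $j\ne i$, one forms a generic linear combination $g=\sum_i\lambda_i f_i$ and checks that $\frakv_i(g)=v_i$ for every $i$ simultaneously, because the possible cancellations in each coordinate $i$ are governed by a nontrivial polynomial condition on $(\lambda_1,\dots,\lambda_r)$ (the leading coefficient of $g\circ\gamma_i$ in degree $v_i$ is an affine-linear function of the $\lambda$'s that is not identically zero, since the $\lambda_i f_i$ term already contributes), and the complement of finitely many such hypersurfaces in affine $\lambda$-space is nonempty. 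This produces $g$ with $\frakv(g)=v$, so $v\in\Ss$. I would present this genericity argument carefully and note that it is the same mechanism underlying the well-known fact that $\Ss$ is a combinatorial object determined by $H$; the remaining bookkeeping (that $H$ and $\Ss$ mutually determine each other) is then immediate, since $\Ss$ is read off from the jumps of $h$, and conversely $h(v)$ counts lattice points below $v$ weighted by the jump pattern dictated by $\Ss$ together with $h(0)=0$.
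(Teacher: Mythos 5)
Your proof is correct and follows essentially the same route as the paper's: observe that $h(v+e_i)-h(v)=\dim J(v)/J(v+e_i)\in\{0,1\}$, characterize when the jump is $1$ via the existence of a suitable $f\in\Oo$, and then produce a single $f$ with $\frakv(f)=v$ by taking a generic linear combination of the $f_i$'s (the paper's phrase ``generic coefficients $\lambda_i$'' is exactly your linear-functional/hypersurface argument). Your exposition is more detailed but does not introduce a different idea; one minor point is that the codimension bound is cleanest phrased via the leading-coefficient map $J(v)\to\mathbb{C}$, $f\mapsto$ (coefficient of $t^{v_i}$ in $f\circ\gamma_i$), whose kernel is $J(v+e_i)$, rather than via the statement that ``$\frakv_i$ takes a single value on each nonzero coset,'' which by itself does not bound the dimension.
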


\begin{proof}
If $h(v+e_i)>h(v)$ for all $i$, then there exist functions $f_i$
such that $\frakv_i(f_i)=v_i$ and $\frakv_j(f_i)\ge v_j$ for $j\neq i$.
 Therefore $\frakv(\sum_{i=1}^{r}\lambda_if_i)=v$
 for generic coefficients $\lambda_i$.
For the second part note that
$h(v+e_i)-h(v)=\dim J(v)/J(v+e_i)$.
This quotient space is trivial if  there is no function $f$ such that
$\frakv_i(f)=v_i$ and $\frakv_j(f)\ge v_j$ for $j\neq i$. Otherwise it is one-dimensional.
Indeed, if $\frakv_i(f_1)=\frakv_i(f_2)=v_i$
then there exists $\lambda\neq 0$ such that $\frakv_i(f_1-\lambda f_2)>v_i$.
If, moreover,  $\frakv_j(f_1),\frakv_j(f_2)\ge v_j$ for all $j\neq i$, then
$\frakv_j(f_1-\lambda f_2)\ge v_j$ too. Therefore $f_1-\lambda f_2\in J(v+e_i).$
\end{proof}
Next we establish the `matroid properties' of the function $h$.
\begin{lemma}
\label{lem:h matroid} (a)  Assume that  $h(v)=h(v+e_{i})$ for some fixed $i\in K_0$. Then
 $h(v+e_K)=h(v+e_K+e_{i})$
for any $K$ with $K\not\ni i$.

(b)
Suppose that $K_1,K_2\subset K_0$ and $v\in \BZ^r$. Then
$$h(v+e_{K_1})+h(v+e_{K_2})\ge h(v+e_{K_1\cap K_2})+h(v+e_{K_1\cup K_2}).$$

(c) For any base vector $e_i$ and $n\geq l_i$ one has
$h(v+(n+1)e_i)-h(v+ne_i)=1$.
\end{lemma}
\begin{proof} (a) Use $J(v+e_K+e_{i})=J(v+e_K)\cap J(v+e_{i})$.

(b) Replacing $v$ by $v+e_{K_1\cap K_2}$, we can assume that $K_1\cap K_2=\emptyset$. Therefore,
 $J(v+e_{K_1})\cap J(v+e_{K_2})=J(v+e_{K_1\cup K_2})$. Hence
$h(v+e_{K_1})+h(v+e_{K_2})-h(v)-h(v+e_{K_1\cup K_2})=
\dim J(v)/(J(v+e_{K_1})+J(v+e_{K_2}))\ge 0.$
For (c) use \eqref{eq:NEWNEW} and Lemma \ref{eq:semi}.
\end{proof}
\begin{remark}
It turns out (using e.g. \eqref{eq:NEWNEW} and
Lemma \ref{eq:semi}) that $l$ is the conductor of $\Ss$, in particular
$v\in\Ss$ whenever  $v\succeq l$.
\end{remark}

\begin{example}\label{ex:AN}
Consider the singularity $A_{2n-1}$ defined by the equation $x^2-y^{2n}=0$. Its Poincar\'e series equals $1+t_1t_2+\cdots+(t_1t_2)^{n-1}$,
and the Poincar\'e series of both its components equals $1/(1-t)$. The Hilbert series is given by the following equation:
$$
H(t_1,t_2)|_{0\preceq v}=\frac{1}{ (1-t_1)(1-t_2)}\left(\frac{t_1}{1-t_1}+\frac{t_2}{ 1-t_2}-t_1t_2(1+\ldots+(t_1t_2)^{n-1})\right).
$$
Therefore, for  non-negative integers $(v_1,v_2)$ one has
$$
h(v)=\begin{cases}
\max(v_1,v_2),& \mbox{\rm if } \min(v_1,v_2)<n,\\
v_1+v_2-n, & \mbox{\rm  otherwise.}\\
\end{cases}
$$
Figure \ref{a3hilb} illustrates this formula for the Hilbert function for $A_3$ singularity.
The points corresponding to the semigroup $\Ss$ are marked in bold.

\begin{figure}
\centering
\begin{tikzpicture}
\draw (0,0) node {\bf 0};
\draw (1,0) node {1};
\draw (2,0) node {2};
\draw (3,0) node {3};
\draw (4,0) node {4};
\draw (0,1) node {1};
\draw (1,1) node {\bf 1};
\draw (2,1) node {2};
\draw (3,1) node {3};
\draw (4,1) node {4};
\draw (0,2) node {2};
\draw (1,2) node {2};
\draw (2,2) node {\bf 2};
\draw (3,2) node {\bf 3};
\draw (4,2) node {\bf 4};
\draw (0,3) node {3};
\draw (1,3) node {3};
\draw (2,3) node {\bf 3};
\draw (3,3) node {\bf 4};
\draw (4,3) node {\bf 5};
\draw (0,4) node {4};
\draw (1,4) node {4};
\draw (2,4) node {\bf 4};
\draw (3,4) node {\bf 5};
\draw (4,4) node {\bf 6};

\draw [dashed] (0.2,0)--(0.9,0);
\draw [dashed] (1.2,0)--(1.9,0);
\draw [dashed] (2.2,0)--(2.9,0);
\draw [dashed] (3.2,0)--(3.9,0);
\draw [dashed] (0.2,1)--(0.9,1);
\draw [dashed] (1.2,1)--(1.9,1);
\draw [dashed] (2.2,1)--(2.9,1);
\draw [dashed] (3.2,1)--(3.9,1);
\draw [dashed] (0.2,2)--(0.9,2);
\draw [dashed] (1.2,2)--(1.9,2);
\draw [dashed] (2.2,2)--(2.9,2);
\draw [dashed] (3.2,2)--(3.9,2);
\draw [dashed] (0.2,3)--(0.9,3);
\draw [dashed] (1.2,3)--(1.9,3);
\draw [dashed] (2.2,3)--(2.9,3);
\draw [dashed] (3.2,3)--(3.9,3);
\draw [dashed] (0.2,4)--(0.9,4);
\draw [dashed] (1.2,4)--(1.9,4);
\draw [dashed] (2.2,4)--(2.9,4);
\draw [dashed] (3.2,4)--(3.9,4);
\draw [dashed] (0,0.3)--(0,0.8);
\draw [dashed] (1,0.3)--(1,0.8);
\draw [dashed] (2,0.3)--(2,0.8);
\draw [dashed] (3,0.3)--(3,0.8);
\draw [dashed] (0,1.3)--(0,1.8);
\draw [dashed] (1,1.3)--(1,1.8);
\draw [dashed] (2,1.3)--(2,1.8);
\draw [dashed] (3,1.3)--(3,1.8);
\draw [dashed] (0,2.3)--(0,2.8);
\draw [dashed] (1,2.3)--(1,2.8);
\draw [dashed] (2,2.3)--(2,2.8);
\draw [dashed] (3,2.3)--(3,2.8);
\draw [dashed] (0,3.3)--(0,3.8);
\draw [dashed] (1,3.3)--(1,3.8);
\draw [dashed] (2,3.3)--(2,3.8);
\draw [dashed] (3,3.3)--(3,3.8);
\draw [dashed] (4,0.3)--(4,0.8);
\draw [dashed] (4,1.3)--(4,1.8);
\draw [dashed] (4,2.3)--(4,2.8);
\draw [dashed] (4,3.3)--(4,3.8);

\end{tikzpicture}
\caption{Values of the Hilbert function for $A_3$}
\label{a3hilb}
\end{figure}

\end{example}

\begin{example}
Consider the singularity $D_5$ defined by the equation
$y\cdot (x^2-y^3) =0$.
Then
$$P(t_1,t_2)=1+t_1t_2^3, \ \ \ P_1(t_1)=\frac{1}{1-t_1}, \ \ \ P_2(t_2)=\frac{1-t_2+t_2^2}{1-t_2}.$$
One can check that $h(v_1,v_2)$ for non-negative $v_1$ and $v_2$ is  given by the following formula:
$$h(v_1,v_2)=
\begin{cases}
 v_1, & \mbox{\rm if    } v_2<3, v_1>0\\
 v_1+1, & \mbox{\rm if    } v_2=3, v_1>0\\
 v_2-1, & \mbox{\rm if   } v_1<2, v_2\ge 2\\
 v_1+v_2-3, & \mbox{\rm if  } v_1\ge 2, v_2\ge 4\\
 0,1,1, & \mbox{\rm if   } v_1=0 \quad \mbox{\rm and  } v_2=0,1,2.\\
\end{cases}$$
Figure \ref{d5hilb} illustrates the Hilbert function and the semigroup of $D_5$.
\end{example}

\begin{figure}
\centering
\begin{tikzpicture}
\draw (0,0) node {\bf 0};
\draw (1,0) node {1};
\draw (2,0) node {2};
\draw (3,0) node {3};
\draw (4,0) node {4};
\draw (5,0) node {5};
\draw (0,1) node {1};
\draw (1,1) node {1};
\draw (2,1) node {2};
\draw (3,1) node {3};
\draw (4,1) node {4};
\draw (5,1) node {5};
\draw (0,2) node {1};
\draw (1,2) node {\bf 1};
\draw (2,2) node {\bf 2};
\draw (3,2) node {\bf 3};
\draw (4,2) node {\bf 4};
\draw (5,2) node {\bf 5};
\draw (0,3) node {2};
\draw (1,3) node {\bf 2};
\draw (2,3) node {3};
\draw (3,3) node {4};
\draw (4,3) node {5};
\draw (5,3) node {6};
\draw (0,4) node {3};
\draw (1,4) node {3};
\draw (2,4) node {\bf 3};
\draw (3,4) node {\bf 4};
\draw (4,4) node {\bf 5};
\draw (5,4) node {\bf 6};
\draw (0,5) node {4};
\draw (1,5) node {4};
\draw (2,5) node {\bf 4};
\draw (3,5) node {\bf 5};
\draw (4,5) node {\bf 6};
\draw (5,5) node {\bf 7};

\draw [dashed] (0.2,0)--(0.9,0);
\draw [dashed] (1.2,0)--(1.9,0);
\draw [dashed] (2.2,0)--(2.9,0);
\draw [dashed] (3.2,0)--(3.9,0);
\draw [dashed] (4.2,0)--(4.9,0);
\draw [dashed] (0.2,1)--(0.9,1);
\draw [dashed] (1.2,1)--(1.9,1);
\draw [dashed] (2.2,1)--(2.9,1);
\draw [dashed] (3.2,1)--(3.9,1);
\draw [dashed] (4.2,1)--(4.9,1);
\draw [dashed] (0.2,2)--(0.9,2);
\draw [dashed] (1.2,2)--(1.9,2);
\draw [dashed] (2.2,2)--(2.9,2);
\draw [dashed] (3.2,2)--(3.9,2);
\draw [dashed] (4.2,2)--(4.9,2);
\draw [dashed] (0.2,3)--(0.9,3);
\draw [dashed] (1.2,3)--(1.9,3);
\draw [dashed] (2.2,3)--(2.9,3);
\draw [dashed] (3.2,3)--(3.9,3);
\draw [dashed] (4.2,3)--(4.9,3);
\draw [dashed] (0.2,4)--(0.9,4);
\draw [dashed] (1.2,4)--(1.9,4);
\draw [dashed] (2.2,4)--(2.9,4);
\draw [dashed] (3.2,4)--(3.9,4);
\draw [dashed] (4.2,4)--(4.9,4);
\draw [dashed] (0,0.3)--(0,0.8);
\draw [dashed] (0.2,5)--(0.9,5);
\draw [dashed] (1.2,5)--(1.9,5);
\draw [dashed] (2.2,5)--(2.9,5);
\draw [dashed] (3.2,5)--(3.9,5);
\draw [dashed] (4.2,5)--(4.9,5);

\draw [dashed] (1,0.3)--(1,0.8);
\draw [dashed] (2,0.3)--(2,0.8);
\draw [dashed] (3,0.3)--(3,0.8);
\draw [dashed] (4,0.3)--(4,0.8);
\draw [dashed] (5,0.3)--(5,0.8);
\draw [dashed] (0,1.3)--(0,1.8);
\draw [dashed] (1,1.3)--(1,1.8);
\draw [dashed] (2,1.3)--(2,1.8);
\draw [dashed] (3,1.3)--(3,1.8);
\draw [dashed] (4,1.3)--(4,1.8);
\draw [dashed] (5,1.3)--(5,1.8);
\draw [dashed] (0,2.3)--(0,2.8);
\draw [dashed] (1,2.3)--(1,2.8);
\draw [dashed] (2,2.3)--(2,2.8);
\draw [dashed] (3,2.3)--(3,2.8);
\draw [dashed] (4,2.3)--(4,2.8);
\draw [dashed] (5,2.3)--(5,2.8);
\draw [dashed] (0,3.3)--(0,3.8);
\draw [dashed] (1,3.3)--(1,3.8);
\draw [dashed] (2,3.3)--(2,3.8);
\draw [dashed] (3,3.3)--(3,3.8);
\draw [dashed] (4,3.3)--(4,3.8);
\draw [dashed] (5,3.3)--(5,3.8);
\draw [dashed] (4,0.3)--(4,0.8);
\draw [dashed] (4,1.3)--(4,1.8);
\draw [dashed] (4,2.3)--(4,2.8);
\draw [dashed] (4,3.3)--(4,3.8);
\draw [dashed] (4,3.3)--(4,3.8);
\draw [dashed] (0,4.3)--(0,4.8);
\draw [dashed] (1,4.3)--(1,4.8);
\draw [dashed] (2,4.3)--(2,4.8);
\draw [dashed] (3,4.3)--(3,4.8);
\draw [dashed] (4,4.3)--(4,4.8);
\draw [dashed] (5,4.3)--(5,4.8);

\end{tikzpicture}
\caption{Values of the Hilbert function for $D_5$}
\label{d5hilb}
\end{figure}

\subsection{The local hyperplane arrangements}\label{ss:ARR}  For any fixed $v$ let us consider the
set
\begin{equation*}
\Hh(v):=\{f\in \Oo\, :\, \frakv(f)=v\}=J(v)\setminus \bigcup_i\, J(v+e_i).
\end{equation*}
Since $J(v+e_i)$ is either $J(v)$ or one of its hyperplanes (cf. \ref{eq:semi}), $\Hh(v)$ is
either empty or it is a hyperplane
arrangement in  $J(v)$. This can be reduced to a finite dimensional
central hyperplane arrangement
$$\Hh'(v):=\frac{J(v)}{J(v+e_{K_0})}\setminus \bigcup_i \frac{J(v+e_i)}{J(v+e_{K_0})},$$
since $\Hh(v)\simeq J(v+e_{K_0})\times \Hh'(v)$. Note that both $\Hh'(v)$ and $\Hh(v)$
admit a free $\BC^*$--action (multiplication by nonzero scalar), hence one automatically has the
two projective arrangements $\BP\Hh'(v)=\Hh'(v)/\BC^*$ and $\BP\Hh(v)=\Hh(v)/\BC^*$ as well.
The following proposition can be deduced from \eqref{htopi} and inclusion-exclusion formula (see e.g. \cite{cdg,cdg2} and Lemma \ref{lem:L} below).

\begin{proposition}
The Euler characteristic of\ \  $\BP\Hh(v)$ (and of\ \ $\BP\Hh'(v)$) equals $\pi_{v}$, the coefficient of the Poincar\'e series $P(t)$ at $t^{v}$.
\end{proposition}

\subsection{Motivic Poincar\'e series}\label{ss:MP} The series $\Pp(t_1,\ldots, t_r;q)\in
\BZ[[t_1,\ldots, t_r]][q]$ is defined in \cite{cdg3} as a refinement of $P(t)$
as follows.   By definition, the coefficient  of
$t_1^{v_1}\ldots t_r^{v_r}$  is the (normalized) class of
$\BP\Hh'(v)$ in the Grothendieck ring of algebraic varieties. It turns out that the class
of a central hyperplane arrangement can always be expressed in terms of the class $\BL$
of the affine line. Indeed, one has:
\begin{lemma}\label{lem:L}
$V$ be a vector space and let $\Hh=\{\Hh_1,\ldots,\Hh_r\}$ be a collection of linear hyperplanes in $V$.
For a subset $K$ we define the rank function by
$\rho(K)=\codim \cup_{i\in K}\Hh_i$.
Then in the Grothendieck ring of varieties
(by the inclusion-exclusion formula) one has
 $$[V\setminus \cup_{i=1}^r\Hh_i]=\sum_{K\subset K_0}(-1)^{|K|}\ [\cap_{\alpha\in K}\Hh_\alpha]
 =\sum_{K\subset K_0}(-1)^{|K|}\ \BL^{\dim V-\rho(K)}.$$
 Since $[\BC^*]=\BL-1$, one also has $[(V\setminus \cup_{i=1}^r\Hh_i)/\BC^*]=[V\setminus \cup_{i=1}^r\Hh_i]
 /(\BL-1)$.
\end{lemma}
\begin{corollary}
The class of the (finite) local hyperplane arrangement $\Hh'(v)$ equals
$$[\Hh'(v)]=(\BL-1)[\BP\Hh'(v)]=\sum_{K\subset K_0}(-1)^{|K|}\ \BL^{h(v+e_{K_0})-h(v+e_K)}.$$
\end{corollary}
Replacing $\BL^{-1}$ by a new variable $q$, one can define (following \cite{cdg3}) the motivic Poincar\'e series
$\Pp(t;q)=\sum_{v}\pi_{v}(q)t^{v}$ by
$$ \pi_{v}(q):=\BL^{1-h(v+e_{K_0})}[\BP\Hh'(v)]\Big|_{\BL^{-1}=q}
=\frac{1}{1-q}\sum_{K\subset K_0}(-1)^{|K|}q^{h(v+e_K)}=$$
$$\sum_{K\subset K_0}(-1)^{|K|}\cdot\frac{q^{h(v+e_K)}-q^{h(v)}}{1-q}.
$$
Note that $\lim_{q\to 1}\Pp(t;q)=P(t)$. In \cite{intfun,mozu} several  properties of
 $\Pp(t_1,\ldots,t_r;q)$ are proved, e.g. it is a
rational function with denominator $\prod_{i=1}^{r}(1-t_{i}q)$. We will need the following.

\begin{lemma}\label{lem:supportMP}
 The support of $\Pp(t;q)$ is exactly $\Ss$. That is, $\pi_v(q)\not=0$ if and only if  $v\in \Ss$.
\end{lemma}

\begin{proof}
If $v\not\in\Ss$, then there exists $i\in K_0$ such that $h(v+e_i)=h(v)$ (cf. \ref{eq:semi}), hence
$\pi_v(q)=0$ by \ref{lem:h matroid}(a).
 If  $v\in S$, then $h(v+e_i)=h(v)+1$ for all $i$ and $h(v+e_{K})\ge h(v)+1$ for all subsets $K$,
 hence $\pi_{v}(q)=q^{h(v)}+$ higher order terms.  \end{proof}

By Theorem \ref{reconst}, $\Pp(t;q)$ and $P(t)$, in fact, determine each other.
\subsection{Conclusion} By the above discussions, the following objects associated with a
plane curve singularity carry the same amount of information: the multi-variable Alexander
polynomial $\Delta(t)$, the semigroup $\Ss$, the Hilbert series $H(t)$,
the Poincar\'e series $P(t)$ and the motivic Poincar\'e series $\Pp(t;q)$.
The role of the spaces $\Hh(v)$ will be crucial in the next parts:
we will compute their homology  using  the Orlik--Solomon
algebras of hyperplane arrangements.
This will connect  two other  objects:
the local lattice homology (associated with the weight function $h$)
and the Heegaard Floer link homology of the link of $C$.
This connection and
the `matroid properties' (\ref{lem:h matroid}) of the weight function $h$ are responsible
for the collapse of a spectral sequence connecting the Heegaard Floer link
homology with the lattice homology.

The Poincar\'e polynomials of all these cohomologies will
be identified with the coefficients of the motivic Poincar\'e series.

\begin{remark}\label{rem:Pdef}
In the above definition $\Oo_{\BC^2,0}$ can be replaced by $\Oo_C$. In this way, one can
extend all the above definitions of $H(t)$, $P(t)$, $\Pp(t;q)$, $\Ss$ to the case of
any (not necessarily plane) reduced curve singularity. The topological embedded--link invariant
$\Delta(t)$ has no analogue  in this general case. It is a nice challenge to find the analogue of the $HFL$--theory
(via $(H(t)$ as in this note) applied for a (non--planar) curve singularity.
\end{remark}

\section{Lattice homology}\label{s:lattice}

Lattice homology associated with the intersection lattice of a resolution of a normal surface
singularity was introduced in \cite{Nemethi08}, as a topological invariant of negative definite plumbed
 3--manifolds. For a possible generalization to algebraic knots, see the recent manuscript  \cite{OSZSt}.

In this section we introduce another
homology theory associated with curve singularities,
where the lattice and the corresponding weight function have a different nature.
In order to make a distinction  between the two cases we will call
the present theory {\it lattice homology of curve singularities via their normalizations}.
In fact, the definitions below extend identically to any,
not necessarily plane curve singularity, that is,  even if  $(C,0)$ does not have any
local {\it embedded link} in the 3--sphere.

\subsection{The general theory: lattice complex, filtrations, lattice homology.}\label{ss:3.1} \

 In this subsection  we present the general theory of lattice homology
 associated with an arbitrary weight function. This will be specialized for the function $h$
 in subsection \ref{ss:3.2}, and for the $HFL$--weight function $g$ given by Heegaard Floer link theory in
 section \ref{s:L} (see also section \ref{s:La}).

We will use the cubes in $\BR^r$ with vertices in the  lattice points  $\BZ^{r}$. Every such cube $\bsq(v,K)$,
where $v\in \BZ^r$ and $ K\subset K_0$, is defined as
$$\bsq=\bsq(v,K)=\{x\in \BR^{r}:v\preceq x\preceq v+e_{K}\},\ \dim \bsq(v,K)=|K|.$$

We consider $\bsq$ with its natural orientation (as a subset of $\BR^r$).
In the classical cubical homology, the chain complex is a free $\BZ$-module with generators $\sq=\sq(v,K)$ corresponding to the cubes $\bsq(v,K)$,
and  the differential can be written as
 $\partial(\sq)=\sum_{i}\varepsilon_{i}\sq_{i},$
 where $\bsq_i$ are oriented codimension 1 faces of the cube $\bsq$.
 \footnote{Note that here and below a full square  $\bsq$ denotes a geometric object ($|K|$--dimensional solid cube in $\BR^r$), while a hollow square $\sq$ denotes
 the corresponding  abstract generator in a chain complex.}

\begin{definition}
Let us choose a function $w:\BZ^r\to \BZ$, which will be called {\em weight function}.
We define the {\it weight  of a cube} by  $$w(\sq)=\max \{w(v): v\in \bsq\cap \BZ^r\}.$$
If $w(v)$ is non-decreasing (that is, $w(u)\leq w(v)$ whenever $u\preceq v$),
 then, in fact, $w(\sq(v,K))=w(v+e_{K})$.
\end{definition}

\begin{definition}
The {\it lattice complex}
$\Ll^{-}_{w}$ associated with a  weight function $w$ is a free $\BZ[U]$-module generated by all cubes
$\sq=\sq(v,K)$ with the following $\BZ[U]$--linear differential:
\begin{equation}\label{delta}
\partial_{U}(\sq)=\sum_{i}\varepsilon_{i}U^{w(\sq)-w(\sq_{i})}\sq_{i}.
\end{equation}
\end{definition}

One verifies that $\partial_{U}^2=0$.
We set $\deg U=-2$ and we introduce the
{\it homological grading} of a generator by
\begin{equation}\label{eq:HOMDEG1}
\deg (U^m\sq)=-2m+\dim (\sq)-2w(\sq).
\end{equation}
The differential $\partial_{U}$ decreases the homological grading by 1.

\begin{remark}
It is clear that the weight functions $w(v)$ and $w(v)+const$ define isomorphic lattice
complexes. However, the shift of $w$ by a constant
induces a shift in the  homological degree \eqref{eq:HOMDEG1} as well.
\end{remark}

\begin{definition}
 We define a $\BZ^r$-indexed filtration on the complex $\Ll^{-}_w$ as follows:  the
 subcomplex $\Ll^{-}_w(u)$ ($u\in \BZ^r$)
is generated over $\BZ[U]$ by all the cubes $\sq(v,K)$ with $v\succeq u$.
\end{definition}

It is easy to see that
 $\partial_{U}$ preserves the filtration,
so $\Ll^{-}_w(u)$ is a subcomplex of $\Ll^{-}_w$ for all $u$.
The next theorem shows that the homologies of different subcomplexes,  and the homology of
$\Ll^-_w$ itself, is simple (compatibly with facts from  Heegaard Floer link theory, cf.
Theorem \ref{th:lspace}).

\begin{theorem}
\label{homology of l(u)}
\ Assume that $w$ is non-decreasing. Then the following facts hold.

(a) The homology of $\Ll^{-}_w(u)$ is isomorphic to $\BZ[U]$ (as $\BZ[U]$--module).
It is generated by the class $\Box(u,\emptyset)$ of  homological degree $-2w(u)$.

(b) If additionally
$w(v)=w(\max\{0,v\})$, then
the inclusion $\Ll^{-}_w(0)\subset \Ll^{-}_w$ induces an isomorphism at the level of homology.
In particular, the homology of $\Ll^-_w$ is $\BZ[U]$.
\end{theorem}
Note that the assumptions on $w$ are satisfied by the Hilbert function $h$
of a curve, see \eqref{eq:MAXh}.

\begin{proof}
(a) For every $k\ge w(u)$ let us define the topological space
$S_{k}(u):=\bigcup\ \bsq(v,K)\subset \BR^r$, where the union is over
cubes\, $\bsq(v,K)$ with $v\succeq u$ and $w(\Box(v,K))=w(v+e_K)\leq k$.
Note that $\bsq(u,\emptyset)$ satisfies the requirements, hence $S_k(u)$ is non-empty, it contains $u$.

Similarly to  \cite[Theorem 3.1.12]{Nemethi08}, we show the following isomorphism of
$\BZ$--modules for any $q\in \BZ$:
\begin{equation}\label{eq:S}
H_{q}(\Ll^{-}_w(u))=\bigoplus_{\substack{k\geq w(u)\\ q'-2k=q}}H_{q'}(S_{k}(u),{\BZ}).
\end{equation}
This can be proved as follows. Let $\Ccc_*(S_k(u))$ be the usual cubical chain complex
of $S_k(u)$.  $\oplus_{k\geq h(u)}\Ccc_*(S_k(u))$ is their direct sum (as chain complexes), where we prefer
 to write $(k,\alpha)$ for an element of the $k$-th component.
 We define the $\BZ$--linear morphism $\Phi:\Ll^-_w(u)\to
\oplus_{k\geq h(u)}\Ccc_*(S_k(u))$ by $U^l\Box(v,K)\mapsto (l+w(v+e_K),\Box(v,K))$, where
the latter cube $\Box(v,K)$ is considered in $\Ccc_{|K|}(S_k(u))$, positioned in the component  $k=l+w(v+e_K)$.
This is a linear isomorphism with inverse $(k, \Box(v,K))\mapsto U^{k-w(\Box(v,K))}\Box(v,K)$.
Moreover,
$\Phi(\partial_U(U^l\Box(v,K)))=\partial \Phi(U^l\Box(v,K))$
(where $\partial$ means the direct sum of usual boundary operators of $\Ccc_*(S_k(u))$).

Furthermore, multiplication by $U$ in $\Ll^-_w(u)$ corresponds to the operator
$(k,\Box)\mapsto (k+1,i(\Box))$, where $i$ is induced by the
inclusion $S_k\hookrightarrow S_{k+1}$ at the level of $\oplus_{k\geq w(u)}\Ccc_*(S_k(u))$.

Hence, $\Phi$ induces a morphism at the level of homology. If the homological degree $-2l+|K|-2w(v+e_K)$
of $U^l\Box(v+e_K)$ is denoted by $q$, then its homological class is sent by $\Phi_*$ into
$H_{q'}(S_k)$, where $q'=|K|$ and $2k=2(l+w(v+e_K))=|K|-q=q'-q$. Hence (\ref{eq:S}) follows.

Next,  we prove that $S_{k}(u)$ is contractible for all $k$.
Indeed, since $w$ is non--decreasing,
if $\bsq(v,K)\subset S_k(u)$, then the set $S_{k}(u)$ contains the whole parallelepiped
$\{x:u\preceq x\preceq v+e_K\}$. Such a space can be contracted to the lattice point  $u$.

In particular, in (\ref{eq:S}) $q'$ should be zero, $q=-2k$ and $k\geq w(u)$, while
$H_0(S_k(u))= \BZ$. This means that $H_q(\Ll^-_w(u))$ is zero unless
$q=-2w(v)-2l$ for $l\geq 0$, and in this case it is $\BZ$ corresponding to
the generator $\Box(u,\emptyset)$ considered in $S_{w(u)+l}$; or, in the homology of
$\Ll^-_w(u)$, to the class of $U^l\Box(u,\emptyset)$. Hence
$$H_{*}(\Ll^{-}_w(u))=\BZ[U]\cdot \sq(u,\emptyset).$$

(b) For $1\leq p\leq r$ we define the sub-complex $\Ll_{w,p}^-$ of $\Ll^-_w$ generated over $\BZ[U]$
by cubes $\Box(v,K)$ with $v=(v_1,\ldots,v_r)$, $v_i\geq 0$ for $1\leq i\leq p$. Then
$\Ll^-_{w,r}=\Ll^-_w(0)$ and we also  set $\Ll^-_{w,0}:=\Ll^-_w$.   We show  that $\Ll^-_{w,p}\subset
\Ll^-_{w,p-1}$ is a homotopy equivalence, hence (b) follows by induction on $p$.

Let $(Q_{p-1},\partial^Q)$ be the quotient complex $\Ll^-_{w,p-1}/\Ll^-_{w,p}$.
It is generated by cubes $\Box(v,K)$ with $v_i\geq 0$ for $1\leq i\leq p-1$ and $v_p<0$.
Note that for such a lattice point one has $w(v)=w(v+e_p)$. Therefore, $(Q_{p-1},\partial^Q)$
is a tensor product of two complexes $(R_{p-1},\partial^R)\otimes (T,\partial^T)$, where
$(T,\partial^T)$ is the quotient lattice complex $\BR/\BR_{\geq 0}$ associated with the constant
zero weight (this corresponds to the $p$-th coordinate). More precisely, $T$ is generated by
0--cubes $a_n:=\Box(n,\emptyset)$  and 1--cubes $\alpha_n:=\Box(n,\{1\})$ for  $n\in \BZ_{<0}$, and $\partial^T(\alpha_n)=
a_{n+1}-a_n$ (with the notation $a_0=0$).
It is easy to check that the homology of $(T,\partial^T)$ is trivial,
hence $H_*(Q_{p-1},\partial^Q)=0$ too.
\end{proof}

The point is that the really
 interesting information is codified in the associated graded versions and in the pages of the
corresponding spectral sequences converging to $H_*(\Ll^-_w)$.

\begin{definition}
We define the multi-graded direct sum  complex $\gr \Ll^{-}=\oplus_v \gr_v\Ll^{-}$, where
$$\gr_v \Ll^{-}=
\Ll^{-}(v)/\textstyle{\sum_{i=1}^{r}}\Ll^{-}(v+e_i)
$$
with induced boundary operator $\gr\partial_U$.
The graded homology group $\HL^{-}=\oplus_v \HL^{-}(v)$, where
$$\HL^{-}(v):=H_{*}(\gr_v \Ll^{-}, \gr_v\partial_U),$$
is called the {\it local lattice homology associated with the weight function $w$}.
It has an induced $\BZ[U]$ module structure.
\end{definition}

\begin{remark}\label{rem:SpecSeq}
Consider the filtration $\{F_n\}_{n\in \BZ}$, where the sub-complex $F_n$ of $\Ll^-_w$
is generated over $\BZ[U]$ by cubes $\Box(v,K)$ with $|v|\geq n$. Then
$F_n/F_{n+1}=\oplus _{|v|=n}\gr_v\Ll^-_w$, and $\oplus_n F_n/F_{n+1}=\gr\Ll^-_w$.
Therefore, there exists a spectral sequence
$$E^1=H_*(\gr \Ll^-_w) \ \Rightarrow \ E^\infty =H_*(\Ll^-_w).$$
\end{remark}

\begin{remark}\label{rem:bigr} {\bf The bigrading of $\Ll^-_w$.}
 \
The following bigrading helps to enlighten some hidden structure of the lattice homology (cf. part (3) of Theorem \ref{zlat}
and the proof after it).
We define the following improvement of the homological grading (\ref{eq:HOMDEG1})
\begin{equation*}
 \bdeg (U^m\sq)=(-2m-2w(\sq),\dim (\sq))\in\BZ^2.
\end{equation*}
Then the boundary operator $\partial_U$ has bidegree $(0,-1)$. In particular, $HL^-(v)$ is also
bigraded. Let $HL^-_{a,b}(v)$ denote the corresponding $(a,b)$--component of $HL^-(v)$.
\end {remark}

\subsection{The case of algebraic curves}\label{ss:3.2}
Given a curve singularity $C$ with Hilbert function $h(v)$, one can consider
the lattice complex with the weight function $v\mapsto h(v)$ (which is non-decreasing).
In this case we will abbreviate the notation to
$\Ll^{-}=\Ll^{-}_{C}:=\Ll^{-}_{h}.$

\begin{theorem}
\label{zlat}
(1)  Consider the motivic Poincar\'e series of $C$,
$\Pp(t;q)=\sum_{v}\pi_{v}(q)t^{v}$.
Then the Poincar\'e polynomial of\, $\HL^-(v)$,
namely $P^{\Ll^-}_{v}(t):=\sum_{i} t^{i}\rank H_{i}(\gr_v \Ll^{-},\gr_v\partial_U)$,
satisfies
\begin{equation}
\label{hfminus}
P^{\Ll^-}_{v}(-t^{-1})=t^{h(v)}\cdot \pi_{v}(t).
\end{equation}

In particular, $(-1)^{h(v)}\cdot \pi_v(-q)$ is a polynomial in $q$ with non-negative coefficients.

Moreover, the Euler characteristic $P^{\Ll^-}_{v}(-1)=\sum_{i} (-1)^i\rank H_{i}(\gr_v \Ll^{-})$ equals
$ \pi_{v}(1)=\pi_v$, the $v$--coefficient of the Poincar\'e series.

(2)  Furthermore,
$H_{-2h(v)-p}(\gr_v \Ll^{-},\gr_v\partial_U)\simeq H^{p}(\BP\Hh'(v),\BZ)$, where  $\BP\Hh'(v)$
is the complement of the projective hyperplane arrangement defined in \ref{ss:ARR}.

(3) If $H_{a,b}(\gr_v \Ll^{-},\gr_v\partial_U)\not=0$ then necessarily $a+2b=-2h(v)$ (or, $\deg=-2h(v)-b$).

(4) The $U$--action on $H_{*}(\gr_v \Ll^{-},\gr_v\partial_U)$ is trivial.
\end{theorem}

We postpone the proof  of Theorem \ref{zlat} till subsection \ref{sec:pf}, where we will use
hyperplane arrangements and their Orlik-Solomon algebras. The surprising similarities between the Orlik--Solomon complex
and the lattice complex will be used deeply.
Nevertheless, here we will show how (\ref{hfminus}) can be deduced
 from (3). This also shows that (\ref{hfminus})
is not the output of a merely homological manipulation, but it reflects a deeper vanishing property
of the Orlik-Solomon algebras.

\begin{proof} (3)$\Rightarrow$ (\ref{hfminus}). \
For an bigraded $\BZ$--module $\{H_{a,b}\}_{a,b}$ set the virtual  Poincar\'e polynomial
$P_{\bdeg}^{vir}(t):=\sum_{a,b}(-1)^bt^a\rank H_{a,b}.$
In particular, this applied to $\gr_v\Ll^-$,
and counting the bi-degrees of the cubes
$\{U^m\sq(v,K)\}_{m\geq 0,\, K\subset K_0}$, we get
$$P_{\bdeg}^{vir}(t)(\gr_v\Ll^-)=\sum_{K\subset K_0}(-1)^{|K|}\cdot \frac{t^{-2h(v+e_K)}}{1-t^{-2}}=\pi_v(t^{-2}).$$
Since the differential $\partial_U$ has bi-degree $(0,-1)$, the virtual Poincar\'e polynomials of the complex and its homology coincide and we get
$P_{\bdeg}^{vir}(t)(HF^-(v))=P_{\bdeg}^{vir}(t)(\gr_v\Ll^-)$, hence
\begin{equation}\label{eq:bdegp}
 \sum_{a, K}(-1)^{|K|} t^a\cdot \rank\, HL^-_{a,|K|}(v)=\pi_v(t^{-2}).
\end{equation}
Then (\ref{hfminus}) is equivalent to
$$
\sum_{a,K}(-1)^{|K|}t^a\cdot \rank \, HL^-_{a,|K|}(v)=
\sum_{a,K}(-1)^{|K|}t^{2a+2h(v)+2|K|}\cdot \rank \, HL^-_{a,|K|}(v).
$$
But this is true, since $a=2a+2h(v)+2|K|$ whenever $HL^-_{a,|K|}(v)\not=0$ by (3).
\end{proof}

\begin{corollary}
\label{support hl}
$v\in \Ss$ if and only if $\HL^-(v)\neq 0$. For any $v\in\Ss$ one has
$P^{\Ll^-}_{v}(-t^{-1})=t^{2h(v)}+$ higher order terms.
(This shows that the class of  $\sq(v,\emptyset)$ does not vanish in $\HL^-(v)$.) In particular,
$P^{\Ll^-}_{v}(t)$ and $\pi_v(q)$ determine each other.
\end{corollary}

\begin{proof} Use Lemma \ref{lem:supportMP} (and its proof) and the identity (\ref{hfminus}).
\end{proof}

\subsection{Example. The case of a curve with one component}\

Suppose that $r=1$. We will abbreviate
$\sq(v,\emptyset) =a_{v}$, $\sq(v,\{1\})=\alpha_{v}$.
If $v\not\in\Ss$ then  $(\gr_v\partial_{U})(\alpha_{v})=a_{v}$,
 hence $\HL^{-}(v)=0$.
If $v\in\Ss$ then  $(\gr_v\partial_{U})(\alpha_{v})=Ua_{v}$,
 hence $\HL^{-}(v)=\BZ\langle a_{v}\rangle$ of homological degree $-2h(v)$.
Hence for $v\in\Ss$ one has  $P^{\Ll^-}_{v}(t)=t^{-2h(v)}$ compatibly with  $\Pp(t;q)=\sum_{v\in\Ss} q^{h(v)}t^v$.

Furthermore, the spectral sequence from Remark \ref{rem:SpecSeq} satisfies
 $E^1\simeq E^\infty\simeq \BZ[U]$ as $\BZ$-modules. Nevertheless, $E^1\not \simeq E^\infty$
 as $\BZ[U]$ modules: $E^1$ has trivial  $U$--action, while   in $E^\infty$
the $U$--action sends the generator of a semigroup element to the generator of the consecutive semigroup
element.

\begin{remark} ({\it The $U=0$ (or ``hat''--) version.})
\
(a) It is interesting to consider the complex $\Ll^-_{U=0}$ too (obtained from $\Ll^-$ via substitution
$U=0$),   generated over $\BZ$ by the cubes and
boundary operator given by (\ref{delta}) with substitution $U=0$.
Then $H_*(\Ll^-_{U=0})=\BZ$ (generated by the class of  $a_0$).
Moreover, the filtration $F_n':=F_n|_{U=0}$ induces a spectral sequence $\{E^k_{U=0}\}_k$.
$F_n' /F_{n+1}'$
is generated over $\BZ$ by all $a_v $ and $\alpha_v$, and the only non-trivial components of
the boundary map are the isomorphisms
$\BZ\langle \alpha_v\rangle \to \BZ\langle a_v\rangle$ for any $v\not\in \Ss$.
Hence  $E^1_{U=0}$ is $\oplus_{v\in \Ss}\BZ\langle a_v, \alpha_v\rangle$
of homological degrees $-2h(v)$ and $-2h(v)-1$ repectively.   The
non-trivial components of the $d^1:E^1_{U=0}\to E^1_{U=0}$ operator are the isomorphisms
$\BZ\langle \alpha_v \rangle \to \BZ\langle a_{v+1} \rangle$ whenever
both $v$ and $v+1$ are elements of $\Ss$. Hence, the $E^2_{U=0}$ term is
  \begin{equation*}
E^2_{U=0}(v)=\left\{ \begin{array}{ll}
\BZ\langle a_v,\alpha_v\rangle & \mbox{if $v\in \Ss$, \ $v-1\not\in\Ss$, \ $v+1\not\in \Ss$},\\
\BZ\langle a_v\rangle & \mbox{if $v\in \Ss$, \
$v-1\not\in \Ss$, \ $v+1\in \Ss$},\\
\BZ\langle \alpha_v\rangle & \mbox{if $v\in \Ss$, \ $v-1\in\Ss$, \
$v+1\not\in \Ss$},\\
0 & \mbox{otherwise}.
\end{array} \right.
\end{equation*}
The parity of the homological degree provides a $\BZ_2$ grading
$\{E^2_{U=0}\}_{\epsilon} $ of $E^2_{U=0}$,
where $\epsilon\in\{0,1\}$ has the same parity as the homological degree.
Then, since $\Delta(t)=(1-t)\sum_{v\in \Ss}t^v$,
$$\sum_{v,\epsilon} (-1)^\epsilon \ \rank( E^2_{U=0}(v)_{\epsilon})\,t^{v+\epsilon}\ = \ \Delta(t).$$
Since for irreducible plane curves $\Ss$ and $\Delta$ classifies the topological type of the
knot of $C$, cf. \cite{Yamamoto}, both  $E^1_{U=0}$ and  $E^2_{U=0}$ terms contain the complete information
about the local  topological type of $C$.
 Note also that
$E^2_{U=0}$ is supported in $[0,\mu]$, where $\mu=2\delta$ is the Milnor number of $C$, and
 $v\mapsto \mu-v-2\epsilon$ is a symmetry of $E^2_{U=0,\epsilon}$ which preserves the $\epsilon$--degree.

The $E^\infty_{U=0}$ term is $H_*(\Ll^-_{U=0})=\BZ$.

(b)
The short exact sequence of complexes $0\to \Ll^-\stackrel{U}{\longrightarrow} \Ll^-\to
\Ll^-_{U=0}\to 0$ induces a long exact sequence connecting the groups $HL^-(v)$ with
 the `$U=0$'--counterparts, whose explicit description is left to the reader.

\end{remark}

\subsection{Example. The case of a curve with two components}\

We will abbreviate
$\sq(v,\emptyset) =a_{v}$, $\sq(v,\{1\})=\alpha_{v}$
$\sq(v,\{2\})=\beta_{v}$ and  $\sq(v,\{1,2\})=\Gamma_{v}$.
By the general theory, if $v\not\in \Ss$ then  $HL^-(v)=0$.
If $v\in\Ss$ there are two cases.

a) $h(v)=
h(v+e_1+e_2)-1$;
$\alpha_v\mapsto Ua_v$, $\beta_v\mapsto Ua_v$, $\Gamma_v\mapsto \alpha_v-\beta_v$,
then  $\HL^-(v)=\BZ\langle a_v\rangle$ of homological degree $-2h(v)$. (In this case, $\BP\Hh(v)=$point.)

b) $h(v)=
h(v+e_1+e_2)-2$;
$\alpha_v\mapsto Ua_v$, $\beta_v\mapsto Ua_v$, $\Gamma_v\mapsto U\alpha_v-U\beta_v$,
then  $\HL^-(v)=\BZ\langle a_v, \alpha_v-\beta_v\rangle$ of homological degrees $-2h(v),-1-2h(v)$.
(Cf. with  $\BP\Hh(v)=\BP^1\setminus2$ points.)

In case (b) the Euler characteristic of $\HL^-(v)$ (and the corresponding coefficient in the Alexander polynomial) vanishes,
but the homology and the coefficient in the motivic Poincar\'e series do not vanish. This case appears, for example,
for all $v$ in the conductor of $C$.


Using   Figures \ref{a3hilb} and \ref{d5hilb}, one can compute the  $\HL^-$ for the singularities of
types $A_3$ and $D_5$. The analogous computation for the two-component singularity $A_{2n-1}$ agrees with the computations
of the Heegaard Floer link  homology in \cite{os3}.
An explicit  computation in the case $A_1$ is given in section \ref{NewExample}.

\subsection{Application to the theory of deformations of singularities}\

In this subsection we consider deformations of plane curve singularities.
>From topological point of view,
they induce cobordisms between the corresponding links in the three-sphere,
hence maps between their Heegaard Floer link homologies. We present here the analogous maps
in lattice homology, under the restriction that
the central fiber of the deformation is irreducible (while
the generic fiber is allowed to have several components).

We wish to emphasize that semicontinuity results for different singularity invariants are crucial
in the deformation theory of singularities, since they might provide more information about the
(open) problem of adjacencies of singularity types.

\begin{proposition}
\label{deform}

 Let $(C',0)$ be a curve singularity with $r$ irreducible components, and assume that it is
a deformation of an irreducible germ  $(C,0)$.
Then $h_{C'}(v)\ge h_{C}(|v|)$ for every  $v\in \BZ^r$.
\end{proposition}

\begin{proof}\footnote{We thank Maria Pe Pereira and Patrick Popescu-Pampu for noting a gap in the first version of the proof of this proposition.}
By Corollary \ref{cor:reconst}, the Hilbert function is determined by the topological type of a singularity. Consider the family of curves $C_t$ with the central fiber $C_0=C$ and the generic fiber $C_t$ topologically equivalent to $C'$. Let us fix $v\in \BZ^r$. One can assume that $h_{C_t}(v)$ is constant for small enough (but nonzero) $t$.

We get a family of subspaces $J_{C_t}(v)$ in $\Oo$ (or rather 
in a sufficiently large jet space $j_N\Oo$) of fixed codimension $h_{C_t}(v)=h_{C'}(v)$. Since the Grassmannian $\Gr(h_{C'}(v),j_N\Oo)$ is compact, this family has a well defined limit $J_{0}(v)=\lim_{t\to 0} J_{C_t}(v)$.

 Let us prove the inclusion $J_0(v)\subset J_{C}(|v|)$. Indeed, every function $g$ in this limiting subspace is a limit of a sequence of functions $g_t$ intersecting $C_t$ with multiplicity at least $|v|$, so by the semicontinuinty of the intersection multiplicity $g$ should intersect $C$ with multiplicity at least $|v|$ too. Therefore  $J_0(v)\subset J_{C}(|v|)$ and $h_{C}(|v|)=\codim J_{C}(|v|)\le \codim J_{0}(v)=h_{C'}(v).$
\end{proof}

After the first version of this paper appeared on arXiv, Borodzik and Livingston \cite{BoLi} gave an alternative proof of this proposition
(only for $\delta$--constant deformations)
using Heegaard Floer theory.

\begin{theorem}
Suppose that a (possibly reducible) curve $C'$ is a deformation of an irreducible curve $C$.
Then there exists a natural chain map $\phi: \Ll^{-}_{C'}\to \Ll^{-}_{C}$, with
$\phi(\Box(v,\emptyset))=U^{h_{C'}(v)-h_C(|v|)}\Box(|v|,\emptyset)$ and
$\phi(\Ll^{-}_{C'}(v))\subset \Ll^{-}_{C}(|v|)$   for any $v\in \BZ^r$. Moreover, for any $v$,
the induced map
 $$\phi_{*}(v):H_{*}(\Ll^{-}_{C'}(v))\to  H_{*}(\Ll^{-}_{C}(|v|))$$
 is the multiplication by $U^{h_{C'}(v)-h_C(|v|)}:\BZ[U]\langle \Box(v,\emptyset)\rangle\to
 \BZ[U]\langle \Box(|v|,\emptyset)\rangle$, hence it is injective.
\end{theorem}

\begin{proof}
Let us define a map $\phi$ acting on the generators of the lattice complex as follows.
For a $0-$ or a $1-$ dimensional cube in $\BZ^r$ one can define its natural projection onto $\BZ$ by
$$p(\Box(v,\emptyset)):=\Box(|v|,\emptyset); \ \ \  p(\Box(v,\{i\})):=\Box(|v|,1).$$
Then for an arbitrary cube $\Box$ define
$$\phi(\Box):=\begin{cases}
U^{h_{C'}(\Box)-h_{C}(p(\Box))}p(\Box) \ \ & \mbox{if}\ \dim \Box\le 1,\\
0&  \mbox{if}\ \dim \Box> 1.\\
\end{cases}
$$
By Lemma \ref{deform} the power of $U$ above is nonnegative, hence $\phi$ is well-defined.
It preserves the filtration on $\Ll^{-}$ and it
commutes with the differentials (by a straightforward computation left to the reader).
The injectivity of $\phi_{*}(v)$  follows from Theorem \ref{homology of l(u)}.
\end{proof}

We plan to study deformation theoretical applications
in more details in the future.

\section{Central hyperplane arrangements}\label{s:Arran}

\subsection{Matroids and rank functions}
\label{ss:OS}

\begin{definition}\label{def:St}  (a) (\cite{stanley})
Let $K_0$ be a finite set. A function $\rho$, assigning a non-negative integer to any subset $K\subset K_0$,
is called a {\em rank function},
if
\begin{enumerate}
\item $0\le \rho(K)\le |K|$.
\item If $K_1\subset K_2$ then $\rho(K_1)\le \rho(K_2)$.
\item For every pair of subsets $K_1$ and $K_2$ one has
$$\rho(K_1\cap K_2)+\rho(K_1\cup K_2)\le \rho(K_1)+\rho(K_2).$$
\end{enumerate}

(b) A {\em matroid} $M=(K_0,\rho)$ is a finite set $K_0$ with a rank function $\rho$ defined on it.

(c) The characteristic polynomial of a matroid $M=(K_0,\rho)$ is defined as
$$\chi_{M}(t)=\sum_{K\subset K_0}(-1)^{|K|}t^{\rho(K_0)-\rho(K)}.$$
\end{definition}

\begin{remark}
Some authors define the characteristic polynomial using the M\"obius function
of a matroid. This definition is equivalent to the present one, see  e.g.  \cite[Theorem 2.4]{stanley}.
\end{remark}

Let $h(v)$ denote the Hilbert function of a plane curve singularity.
Let us fix $K_0=\{1,\ldots,r\}$ and for every $v$ consider the following
function on subsets of $K_0$:
$$\rho_{v}(K):=h(v+e_{K})-h(v)=\dim J(v)/J(v+e_{K}).$$
Then  Lemmas \ref{eq:semi} and \ref{lem:h matroid} show that
for every $v$ the function $\rho_{v}$ is a rank function on $K_0$.

We will call $\rho_{v}$ the rank function for a the {\it local matroid $M_{v}$}.
In the space $J(v)$ we have $r$ subspaces $J(v+e_i)$ of codimension 0 or 1.
If $v\in\Ss$, then the set of functions with valuation $v$ can be represented
as a complement of a  hyperplane arrangement (cf. \cite{mozu}, or \ref{ss:ARR} here).
If $v\not\in \Ss$, then $J(v)=J(v+e_{i})$ for some $i$ (cf. Lemma \ref{eq:semi}), hence
 $J(v+e_K)=J(v+e_K+e_{i})$
for any $K$ with $K\not\ni i$ by \ref{lem:h matroid}. Therefore, in this case, by pairwise
 cancelation, $\chi_{M_v}(t)=0$.

\subsection{Some general facts on central hyperplane arrangements}\label{ss:4.2} \
Let $V$ be a
vector space and let $\Hh=\{\Hh_1,\ldots,\Hh_r\}$ be a collection of linear hyperplanes in $V$.
For a subset $K$ of $K_0=\{1,\ldots,r\}$ we define
$\rho(K)=\codim \cap_{i\in K}\,\Hh_i$.
One can check that $\rho$ is a rank function on $K_0$.
Let us denote by $\chi_{\Hh}(t)$ its characteristic polynomial.

To an arrangement $\Hh$
one associates the corresponding Orlik-Solomon algebra
as follows. Consider the anticommutative algebra $\Ee$ generated by
the variables $z_1,\ldots,z_r$ corresponding to hyperplanes.
For any set $K=\{i_1,\ldots,i_{k}\}\subset K_0$ we consider the monomial
$z_K=z_{i_1}\wedge\cdots\wedge z_{i_{k}}\in \Ee$.
 We can equip $\Ee$ with the natural differential $\partial$ sending $z_i$ to 1, namely
$$\partial (z_K)=\sum_{j=1}^{k}(-1)^{j-1}z_{K\setminus \{i_{j}\}}.$$
The natural degree of $z_K$ is $|K|$. Hence $\partial $ has degree $-1$.
\begin{definition}
We call the set $K$ {\it dependent}, if the linear equations of the corresponding hyperplanes are linearly dependent.
Otherwise $K$ is called {\it independent}.

The Orlik-Solomon ideal $\Ii$ is the ideal in $\Ee$ generated by the elements $\partial z_K$ for all dependent sets $K$.
The Orlik-Solomon algebra  is the quotient $\Aa=\Ee/\Ii$.
\end{definition}

\begin{theorem}(\cite[Theorem 5.2]{or1})
\label{osb}
The integral cohomology ring of the complement $V\setminus \cup_{i=1}^{r}\Hh_i$ is isomorphic to the Orlik-Solomon algebra $\Ee/\Ii$.
It has no torsion, and its Poincar\'e polynomial is given by the formula
$$P(\Hh,t)=(-t)^{\rho(K_0)}\cdot \chi_{\Hh}(-t^{-1})=\sum_{K\subset K_0}(-1)^{|K|}(-t)^{\rho(K)}.$$
\end{theorem}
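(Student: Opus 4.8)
The plan is to exhibit a surjective graded ring homomorphism from the Orlik--Solomon algebra onto $H^{*}(M;\mathbb{Z})$, where $M=V\setminus\cup_i\Hh_i$, and then to upgrade surjectivity to an isomorphism by a dimension count organised around deletion--restriction. First I would take $V$ to be a complex vector space, choose linear forms $\alpha_i$ with $\Hh_i=\{\alpha_i=0\}$, and form the closed logarithmic $1$-forms $\omega_i=\tfrac{1}{2\pi\sqrt{-1}}\,d\alpha_i/\alpha_i$ on $M$. Since the classes $[\omega_i]\in H^{1}(M;\mathbb{Z})$ anticommute, $z_i\mapsto[\omega_i]$ extends to a graded homomorphism $\Ee\to H^{*}(M;\mathbb{Z})$; by Brieskorn's theorem the products of the $[\omega_i]$ generate $H^{*}(M;\mathbb{Z})$, so this map is onto. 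The Orlik--Solomon relations go to zero: if $K$ is dependent, a nontrivial relation $\sum_{i\in K}c_i\alpha_i=0$, divided through by $\prod_{i\in K}\alpha_i$, becomes literally the identity $\sum_j(-1)^{j-1}\omega_{K\setminus\{\alpha_j\}}=0$ of forms on $M$, so $\partial z_K\mapsto 0$. This yields the surjection $\Phi\colon\Aa=\Ee/\Ii\to H^{*}(M;\mathbb{Z})$.

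Next I would run deletion--restriction to pin down the Poincar\'e series. Fix a hyperplane $\Hh_r$, put $\Hh'=\Hh\setminus\{\Hh_r\}$, and let $\Hh''$ be the arrangement $\{\Hh_i\cap\Hh_r\}_{i\ne r}$ inside $\Hh_r$. Topologically $M(\Hh)=M(\Hh')\setminus M(\Hh'')$ with $M(\Hh'')$ a closed complex submanifold of complex codimension one, so there is a Gysin exact sequence
\[
\cdots\to H^{k}(M(\Hh'))\to H^{k}(M(\Hh))\xrightarrow{\ \mathrm{res}\ } H^{k-1}(M(\Hh''))\xrightarrow{\ \delta\ } H^{k+1}(M(\Hh'))\to\cdots .
\]
The crucial point is that $\mathrm{res}$ is surjective (the residue of $\omega_r$ along $\Hh_r$ is $1$, and one produces further preimages by multiplying by $\omega_r$ and inducting on $|\Hh|$), so all connecting maps $\delta$ vanish, the sequence splits into short exact pieces, and $P(\Hh,t)=P(\Hh',t)+t\,P(\Hh'',t)$. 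On the combinatorial side the characteristic polynomial obeys the Whitney recursion $\chi_{\Hh}=\chi_{\Hh'}-\chi_{\Hh''}$, which after the normalisation in the statement is exactly the same recursion for $F(\Hh,t):=(-t)^{\rho(K_0)}\chi_{\Hh}(-t^{-1})$; here $F(\Hh,t)=\sum_{K\subset K_0}(-1)^{|K|}(-t)^{\rho(K)}$ by the formal identity $(-t)^{\rho(K_0)}(-t^{-1})^{\rho(K_0)-\rho(K)}=(-t)^{\rho(K)}$, so the two displayed expressions in the statement agree a priori. Matching base cases (the empty or one-element arrangement), induction gives $P(\Hh,t)=F(\Hh,t)$; this is compatible with specialising Lemma \ref{lem:L}, since $M$ has cohomology concentrated in Hodge bidegrees $(p,p)$.

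Finally I would show $\Phi$ is an isomorphism. The cleanest route is to verify that the Orlik--Solomon algebra itself satisfies the shifted recursion $0\to\Aa(\Hh')\to\Aa(\Hh)\to\Aa(\Hh'')[-1]\to 0$ — equivalently, that the monomials $z_K$ for which $K$ contains no broken circuit form a $\mathbb{Z}$-basis of $\Aa$, whose cardinality in degree $k$ is, by the classical broken-circuit theorem (Whitney, Rota; see e.g.\ \cite{stanley}), the absolute value of the relevant coefficient of $\chi_{\Hh}$. Then $\dim_{\mathbb{Q}}(\Aa^{k}\otimes\mathbb{Q})=b_k(M)$ for all $k$; combined with the surjectivity of $\Phi$ and the freeness of $\Aa$ as an abelian group, this forces $\Phi$ to be an isomorphism of graded rings and shows $H^{*}(M;\mathbb{Z})$ is torsion free. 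The hard part is precisely the splitting of the Gysin sequence, or its algebraic counterpart — i.e.\ establishing that the no-broken-circuit monomials remain linearly independent in cohomology — which is the substantive content of \cite[Theorem 5.2]{or1}; the passage to the stated closed formula for $P(\Hh,t)$ is then only formal manipulation of the matroid rank function $\rho$.
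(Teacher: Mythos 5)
This theorem is quoted, not proved, in the paper: it is the classical Orlik--Solomon result, cited as \cite[Theorem 5.2]{or1}, so there is no in-paper argument to compare your sketch against. Your outline is the standard proof (logarithmic forms, Brieskorn generation, deletion--restriction/broken circuits) and is essentially correct, so I will only flag one imprecision and the one point you already acknowledge you are outsourcing.

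The imprecision is in the verification that the Orlik--Solomon relations die in cohomology. You take an arbitrary dependent set $K$ and a nontrivial relation $\sum_{i\in K}c_i\alpha_i=0$, divide by $\prod_{i\in K}\alpha_i$, and claim this is literally the Arnold identity $\sum_j(-1)^{j-1}\omega_{K\setminus\{\alpha_j\}}=0$. That derivation is valid only when every $c_i\neq 0$, i.e.\ when $K$ is a \emph{circuit}; if some $c_i$ vanish, the divided-out relation does not involve all the $\omega_{K\setminus\{\alpha_j\}}$ and is not the identity you want. The repair is standard and should be stated: first establish $\partial z_{K'}\mapsto 0$ for circuits $K'$, then observe that for any dependent $K\supseteq K'$ one has $z_{K'}=\pm z_i\wedge\partial z_{K'}$ for $i\in K'$ and $z_K=\pm z_{K'}\wedge z_{K\setminus K'}$, so $\partial z_K$ already lies in the ideal generated by $\partial z_{K'}$ (equivalently, $\Ii$ is generated by the circuit relations).

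Otherwise the skeleton is sound. The formal manipulation $(-t)^{\rho(K_0)}(-t^{-1})^{\rho(K_0)-\rho(K)}=(-t)^{\rho(K)}$ does reconcile the two displayed expressions, and the Whitney/Gysin recursions $\chi_{\Hh}=\chi_{\Hh'}-\chi_{\Hh''}$ and $P(\Hh,t)=P(\Hh',t)+t\,P(\Hh'',t)$ match after normalization, with the correct degree shift coming from $\rho$ dropping by one on the restriction. You correctly identify that the substantive input --- the splitting of the Gysin sequence, equivalently the linear independence of the no-broken-circuit monomials in cohomology --- is precisely the content of the cited \cite[Theorem 5.2]{or1}; since the paper itself treats that as a black box, so may you.
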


As a corollary, we conclude that the homology of $V\setminus\cup_{i=1}^{r}\Hh_i$
is defined by its class in the Grothendieck ring, cf. Lemma \ref{lem:L}.
The same is true for its projectivization (see below). (This property of hyperplane complements
explain why the coefficients of the motivic Poincar\'e series can guide the complete cohomological
information.)

Later we will define a distinguished homological degree in $\Ee$, such that the above isomorphism will
preserve the corresponding gradings.

\vspace{2mm}

First, we consider the following `{\it deformation of the differential  on $\Ee$}'.

\begin{definition}
Let us define the following operator:
$$\partial_{U}:\Ee[U]\to \Ee[U],\quad  \partial_{U}(z_K)=\sum_{j=1}^{k}(-1)^{j-1}
U^{\rho(K)-\rho(K\setminus \{i_j\})}z_{K\setminus i_j},$$
where $U$ is a formal variable and $K=\{i_1,\ldots,i_{k}\}$.
\end{definition}

Note that $\rho(K)-\rho(K\setminus \{i_j\})\in \{0,1\}$, hence $\partial_U$ decomposes  into a sum of two components
\begin{equation}
\partial_U=\partial_0+U\partial_1,\quad \mbox{\rm with}\quad \partial_0+\partial_1=\partial.
\end{equation}

\begin{lemma}
\label{dd}
The operator $\partial_{U}$ is a differential on $\Ee[U]$, that is,
$\partial_{U}^2=0$.
In particular,  the following identities hold:
$$\partial_0^2=\partial_1^2=0, \ \ \ \partial_0\partial_1+\partial_1\partial_0=0.$$
\end{lemma}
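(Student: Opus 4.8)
The plan is to verify the identity $\partial_U^2 = 0$ by comparing it with the known fact that the \emph{classical} Orlik--Solomon differential $\partial$ satisfies $\partial^2 = 0$, and then extracting information about the graded pieces. First I would observe that $\partial_U$ is homogeneous of degree $-1$ in the $\Ee$-grading and that, by the computation $\rho(K) - \rho(K\setminus\{\alpha_i\}) \in \{0,1\}$ quoted just before the lemma, we may write $\partial_U = \partial_0 + U\partial_1$ where $\partial_0$ drops those $z_{K\setminus\alpha_i}$ with $\rho(K) = \rho(K\setminus\{\alpha_i\})$ and $\partial_1$ keeps the complementary terms; in particular $\partial_0 + \partial_1 = \partial$ as operators on $\Ee$ (forgetting $U$). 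Since $U$ is a central formal parameter, expanding $\partial_U^2 = \partial_0^2 + U(\partial_0\partial_1 + \partial_1\partial_0) + U^2\partial_1^2$ shows that $\partial_U^2 = 0$ is \emph{equivalent} to the three identities $\partial_0^2 = 0$, $\partial_1^2 = 0$, $\partial_0\partial_1 + \partial_1\partial_0 = 0$ displayed in the statement; so it suffices to prove these.

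The key step is a local rank computation on a three-element ``chain'' $K\setminus\{\alpha_i,\alpha_j\} \subset K\setminus\{\alpha_i\} \subset K$. Applying $\partial_U$ twice to $z_K$ produces, for each unordered pair $\{\alpha_i,\alpha_j\}\subset K$, a coefficient of $z_{K\setminus\{\alpha_i,\alpha_j\}}$ of the shape $\pm\big(U^{\rho(K)-\rho(K\setminus\alpha_i)}U^{\rho(K\setminus\alpha_i)-\rho(K\setminus\{\alpha_i,\alpha_j\})} - U^{\rho(K)-\rho(K\setminus\alpha_j)}U^{\rho(K\setminus\alpha_j)-\rho(K\setminus\{\alpha_i,\alpha_j\})}\big)$, where the sign is exactly the sign that makes the classical $\partial^2 = 0$ work. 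But both exponents telescope to the same value $\rho(K) - \rho(K\setminus\{\alpha_i,\alpha_j\})$, so the two monomials are identical and cancel. Hence $\partial_U^2 z_K = 0$ for every $K$, proving $\partial_U^2 = 0$ on all of $\Ee[U]$ by $\mathbb{Z}[U]$-linearity (and anticommutativity handles the reordering of indices). I would then read off the three component identities by extracting the coefficients of $U^0$, $U^1$, $U^2$: the $U^0$-coefficient of the cancellation gives $\partial_0^2 = 0$, the $U^2$-coefficient gives $\partial_1^2 = 0$, and the $U^1$-coefficient gives $\partial_0\partial_1 + \partial_1\partial_0 = 0$.

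The main obstacle — really the only subtlety — is bookkeeping the signs $(-1)^{i-1}$ and making sure the pairwise cancellation within each coefficient of $z_{K\setminus\{\alpha_i,\alpha_j\}}$ mirrors the textbook proof that $\partial^2 = 0$ on the exterior algebra; once one writes $\partial_U(z_K) = \sum_i (-1)^{i-1} U^{c_i} z_{K\setminus\alpha_i}$ with $c_i = \rho(K) - \rho(K\setminus\{\alpha_i\})$ and applies $\partial_U$ again, the double sum splits into terms indexed by ordered pairs, and the telescoping of exponents $c_i + c'_{ij} = c_j + c'_{ji}$ (where $c'_{ij}$ is the exponent arising from removing $\alpha_j$ \emph{after} $\alpha_i$) is immediate from additivity of the telescoping sum of rank differences along the chain. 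No property of $\rho$ beyond monotonicity is needed here; the values lying in $\{0,1\}$ is used only to get the clean decomposition $\partial_U = \partial_0 + U\partial_1$ with $\partial_0 + \partial_1 = \partial$. One should also note that $\partial_U$ does not descend to the Orlik--Solomon \emph{algebra} $\Aa = \Ee/\Ii$ in general, but the lemma is only asserted on $\Ee[U]$, so this is not an issue for the statement at hand.
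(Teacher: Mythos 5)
Your proof is correct and is precisely the direct verification the paper dismisses with ``Straightforward'': the telescoping identity $\bigl(\rho(K)-\rho(K\setminus\alpha_i)\bigr)+\bigl(\rho(K\setminus\alpha_i)-\rho(K\setminus\{\alpha_i,\alpha_j\})\bigr)=\rho(K)-\rho(K\setminus\{\alpha_i,\alpha_j\})$ makes the $U$-powers on the two orders of removal agree, after which the classical sign cancellation for $\partial^2=0$ finishes the job, and reading off the $U^0,U^1,U^2$ coefficients of $\partial_U^2=\partial_0^2+U(\partial_0\partial_1+\partial_1\partial_0)+U^2\partial_1^2$ yields the three component identities. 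Your parenthetical remarks (that only non-negativity of the rank increments is used for $\partial_U^2=0$ itself, and that $\partial_U$ need not descend to $\Ee/\Ii$) are accurate and harmless.
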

\begin{proof} Straightforward.
\end{proof}

Let
 $\Jj$ and   $\Jj^{\perp}$ denote the {\it subspaces} of $\Ee$ spanned by the elements $z_K$ for all dependent, respectively
 independent  subsets $K$.
Clearly $\Ee=\Jj\oplus \Jj^{\perp}.$

\begin{lemma}
\label{d0reduction}
The following statements hold:

\begin{itemize}

\item[(a)] (\cite[Lemma 2.7]{or1},  \cite[Lemma 3.15]{or2}) \ \  $\Ii=\Jj+\partial \Jj$.

\item[(b)]$\partial_{0}\Jj^{\perp}=0$, hence $\Imm\partial_0=\partial_0\Jj$.

\item[(c)]$\partial_{1}\Jj\subset \Jj$, hence $\Ii=\Jj+\partial\Jj=\Jj+\partial_0\Jj$.

\item[(d)]$\ker\partial_0=\Jj^{\perp}+\Imm\partial_0$.

\item[(e)]There exist subspaces $A\subset \Jj,B\subset \Jj^{\perp}$ such that $\Imm\partial_0=A\oplus B.$

\end{itemize}
\end{lemma}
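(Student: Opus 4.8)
The plan is to work through the five items in order, since each builds on the previous. For (a), the inclusion $\Jj+\partial\Jj\subset\Ii$ is clear because $\Ii$ is generated by $\partial z_K$ for dependent $K$, and a dependent set has a dependent subset of the right size; the reverse inclusion is the content of the cited Lemma 2.7 of \cite{or1}, which I would simply invoke. For (b), observe that $\partial_0 z_K=\sum_i(-1)^{i-1}[\rho(K)=\rho(K\setminus\{\alpha_i\})]\,z_{K\setminus\{\alpha_i\}}$; if $K$ is independent then removing any element cannot leave the rank unchanged (an independent set has rank equal to its cardinality, and so does every subset), so every term vanishes and $\partial_0\Jj^\perp=0$. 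Combined with $\Ee=\Jj\oplus\Jj^\perp$ this gives $\Imm\partial_0=\partial_0\Jj$. For (c), if $K$ is dependent then $\partial_1 z_K$ is a signed sum of $z_{K\setminus\{\alpha_i\}}$ over those $i$ with $\rho(K\setminus\{\alpha_i\})=\rho(K)-1$; I must check that each such $K\setminus\{\alpha_i\}$ is still dependent. This follows from the submodularity-type bound: $|K\setminus\{\alpha_i\}|-\rho(K\setminus\{\alpha_i\})=|K|-1-(\rho(K)-1)=|K|-\rho(K)>0$ since $K$ is dependent. Hence $\partial_1\Jj\subset\Jj$, and since $\partial=\partial_0+\partial_1$ we get $\partial\Jj\subset\Jj+\partial_0\Jj$, so by (a) $\Ii=\Jj+\partial_0\Jj$.

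For (d), one inclusion is immediate: $\Jj^\perp\subset\Ker\partial_0$ by (b), and $\Imm\partial_0\subset\Ker\partial_0$ by $\partial_0^2=0$ (Lemma \ref{dd}), so $\Jj^\perp+\Imm\partial_0\subset\Ker\partial_0$. For the reverse, take $x\in\Ker\partial_0$ and write $x=x_{\Jj}+x_{\perp}$ with $x_{\Jj}\in\Jj$, $x_\perp\in\Jj^\perp$; then $0=\partial_0 x=\partial_0 x_{\Jj}$, and I want to conclude $x_{\Jj}\in\Imm\partial_0$. Here I would use the fact that $\partial_0$ is, up to the rank-weighting, the restriction of the ordinary Orlik--Solomon differential $\partial$ to the ``broken-circuit / deletion'' combinatorics, and that on $\Jj$ the complex $(\Jj,\partial_0)$ is exact except possibly where forced — equivalently, exhibit an explicit contracting homotopy on $\Jj$. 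Concretely, pick a linear order on $K_0$ and for a dependent set $K$ let $m(K)$ be its smallest element; the standard homotopy $s(z_K)=\pm z_{K\cup\{m\}}$-type construction for Orlik--Solomon, adapted so that it only raises rank when it should, should give $\partial_0 s+s\partial_0=\mathrm{id}$ on a complement of $\Jj^\perp$ inside $\Ker$. I expect this homotopy bookkeeping to be the main obstacle: one must be careful that inserting an element into a dependent set keeps it dependent and that the rank actually increases, so that the inserted monomial lands in $\Jj$ with the correct $\partial_0$-weight. Finally, (e) is a soft consequence: $\Imm\partial_0\subset\Ker\partial_0=\Jj^\perp\oplus(\Jj\cap\Ker\partial_0)$ by (d) and the direct sum decomposition $\Ee=\Jj\oplus\Jj^\perp$, and any subspace of a direct sum $\Jj\oplus\Jj^\perp$ — here I instead decompose $\Imm\partial_0$ itself. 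Since $\Imm\partial_0=\partial_0\Jj$ by (b), write each element of $\Imm\partial_0$ via its $\Jj$- and $\Jj^\perp$-components; choosing bases compatibly (e.g. by a Gaussian-elimination argument on the matrix of $\partial_0|_\Jj$ with respect to bases of $\Jj$ and $\Jj\oplus\Jj^\perp$) produces subspaces $A\subset\Jj$ and $B\subset\Jj^\perp$ with $\Imm\partial_0=A\oplus B$. This last step is purely linear algebra and requires no new ideas.
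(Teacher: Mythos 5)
Parts (a)--(c) are correct and essentially identical to the paper's treatment. The problems are in (d) and (e).

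For (d) you reformulate the goal as $\Ker\partial_0\cap\Jj\subset\Imm\partial_0$, which (granting (e)) is equivalent to the statement, but you then only gesture at ``a standard homotopy $s(z_K)=\pm z_{K\cup\{m\}}$-type construction'' and explicitly flag the bookkeeping as the main obstacle without carrying it out. That bookkeeping is in fact the content: one has to verify a matroid-exchange property (if $\alpha_j\in K$, $z_{\alpha_j}$ is redundant in $K\cup\{1\}$ but essential in $K$, then $z_1$ is essential in $K\cup\{1\}\setminus\{\alpha_j\}$) to control the terms produced by the candidate homotopy, and then iterate the reduction over $z_1,z_2,\dots$. The paper does exactly this explicit reduction. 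As written, your (d) is a plan, not a proof.

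Part (e) is genuinely wrong as argued. You claim that producing $A\subset\Jj$, $B\subset\Jj^{\perp}$ with $\Imm\partial_0=A\oplus B$ is ``purely linear algebra'' via Gaussian elimination. It is not: a generic subspace of a direct sum $\Jj\oplus\Jj^{\perp}$ does \emph{not} split as a sum of a subspace of $\Jj$ and a subspace of $\Jj^{\perp}$ (take $\Jj=\langle e_1\rangle$, $\Jj^{\perp}=\langle e_2\rangle$, $W=\langle e_1+e_2\rangle$). The splitting in (e) holds only because of a structural dichotomy for $\partial_0$: if $K$ is dependent and $z_{K'}$ occurs in $\partial_0(z_K)$, then $\rho(K')=\rho(K)$ and $|K'|=|K|-1$, so $K'$ is independent precisely when $\rho(K)=|K|-1$; hence each generator $\partial_0(z_K)$ lies entirely in $\Jj^{\perp}$ (when $\rho(K)=|K|-1$) or entirely in $\Jj$ (when $\rho(K)<|K|-1$). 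This is the observation that makes (e) true, and it is also what is used later in the proof of Theorem~\ref{homology of d0}; omitting it leaves a real gap.
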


\begin{proof}
The claims (b) and (c) are clear. Let us prove (d).
The inclusion $\Jj^{\perp}+\Imm\partial_0\subset \ker\partial_0$ is also clear, hence
we need to prove that if $\partial_{0}(\phi)=0$ then there exists $\widetilde{\phi}\in \Jj^{\perp}$ such that
$\phi- \widetilde{\phi}\in \Imm(\partial_{0}).$

Let us call $z_i$ {\it essential} in a monomial $z_i\wedge z_{K}$, if $\rho(\{i\}\sqcup K)=\rho(K)+1$, and {\it redundant} otherwise.
Let us decompose $\phi=z_1\wedge \phi_1+z_1\wedge \phi_2+\phi_3$, where $z_1$ is essential in every monomial of $z_1\wedge \phi_1$,
redundant  in every monomial of $z_1\wedge \phi_2$, and $\phi_3$ contains no $z_1$.
Then
$$0=\partial_{0}(\phi)=z_1\wedge \psi+\phi_2+\partial_{0}(\phi_3)$$
for some $\psi$, and neither $\phi_2$ nor $\partial_{0}(\phi_3)$ contain $z_1$.
Hence $\phi_2=-\partial_{0}(\phi_3)$.
Since $z_1$ is redundant in every monomial in $z_1\wedge \partial_{0}(\phi_3)$, it is redundant in every monomial in $z_1\wedge \phi_3$ too.
Therefore
$$\partial_{0}(z_1\wedge \phi_3)=\phi_3-z_1\wedge \partial_{0}(\phi_3)+z_1\wedge \eta,$$
where $z_1$ is essential in every monomial of $z_1\wedge \eta$.
Indeed, if $i_{j}\in K$, $z_{i_j}$ is redundant in $K\cup \{1\}$ and essential in $K$, then $z_1$ is essential
in  $K\cup \{1\}\setminus \{i_j\}.$
We conclude that
$$\phi-\partial_0(z_1\wedge \phi_3)=z_1\wedge (\phi_1-\eta)$$
and $z_1$ is essential in every monomial in the right hand side.
Now,  $0=\partial_0(\phi)=-z_1\wedge \partial_0(\phi_1-\eta)$, hence  $\partial_0(\phi_1-\eta)=0$.
 Then we can repeat the procedure inductively replacing $\phi$ by $\phi_1-\eta$, and $z_1$ by $z_2$,  etc.
At the end we reduce $\phi$ modulo $\Imm(\partial_{0})$ to an element of $\Ee$
where all $z_{i}$ are essential; such an element belongs to $\Jj^{\perp}$.

Next, we  prove (e). Recall that $\Imm\partial_0=\partial_0\Jj$ and
$K$ is dependent iff $\rho(K)<|K|$. If the monomial $z_{K'}$ appears in $\partial_{0}(z_{K})$
then $\rho(K)=\rho(K')$ and $|K'|=|K|-1$. Therefore,
with $K$ dependent,  $\partial_{0}(z_{K})\in \Jj^{\perp}$
if $\rho(K)=|K|-1$, and $\partial_{0}(z_{K})\in \Jj$ otherwise.
\end{proof}

\begin{lemma}(cf. \cite[Lemma 3.42]{or2}, \cite[1.46]{BOOK09})
\label{d1 acyclic on A}
Let $\partial_1^{{\Aa}}$  be the differential induced by $\partial_1$
on $\Aa=\Ee/\Ii$.  Then $\partial_1^{{\Aa}}$ is acyclic,
 that is, ${\rm im }\,\partial_1^{{\Aa}}=\ker \partial_1^{{\Aa}}$.
\end{lemma}

\begin{proof} In the proof we always refer to the points (a)--(e) of Lemma \ref{d0reduction}.
Suppose that the class $[\alpha]\in \Aa=\Ee/\Ii$ belongs to the kernel of $\partial_1^{{\Aa}}$,
so $\partial_1(\alpha)\in \Ii$. By (c) we can assume that $\alpha\in \Jj^{\perp}$.  Then
$\partial(\alpha)=\partial_1(\alpha)\in \Ii\cap \Jj^{\perp}$. By (c)-(e)
$\Ii\cap \Jj^{\perp}=(\Jj+A\oplus B)\cap \Jj^{\perp}=B\subset \partial_0\Jj$, hence
 there exists $\alpha_1\in \Jj$ such that $\partial_1(\alpha)=\partial_0(\alpha_1)$.
Furthermore,
$\partial_0\partial_1(\alpha_1)=\partial_1\partial_0(\alpha_1)=0,$
hence $\partial_1(\alpha_1)\in \ker\partial_0\cap \partial_1\Jj$.
But again by (c)-(d)-(e) one has $\ker\partial_0\cap \partial_1\Jj\subset (\Jj^{\perp}+A\oplus B)\cap\Jj=A\subset \partial_0\Jj$.
Hence there exists $\alpha_2\in \Jj$ with $\partial_1(\alpha_1)=\partial_0(\alpha_2)$. Again,
$\partial_1(\alpha_2)\in \ker\partial_0\cap \partial_1\Jj$.
This procedure can be repeated to provide  $\alpha_3\in\Jj$ with $\partial_1(\alpha_2)=\partial_0(\alpha_3)$, and, in fact,
a sequence $\alpha_i\in \Jj$ with $\partial_0(\alpha_i)=\partial_1(\alpha_{i-1})$ ($\alpha_0=\alpha$).

Note that $\rho(\alpha_i)=\rho(\alpha)-i$, so  this process eventually stops. Now
$$
\partial(\alpha-\alpha_1+\alpha_2-\ldots)=\partial_1(\alpha)-\partial_0(\alpha_1)-\partial_1(\alpha_1)+
\partial_0(\alpha_2)+\partial_1(\alpha_2)-\ldots=0.
$$
Since $\partial$ is acyclic on $\Ee$, there exists $\beta$ such that
$
\partial(\beta)=\alpha-\alpha_1+\alpha_2-\ldots
$.
Let us decompose $\beta=\beta'+\beta''$, where $\beta'\in \Jj^{\perp}$ and $\beta''\in \Jj$, then by (b),
$$
\alpha=\partial_1(\beta')+\partial(\beta'')+\alpha_1-\alpha_2+\ldots\equiv \partial_1(\beta')\mod \Ii,
$$
hence $[\alpha]$ belongs to the image of  $\partial_1^{{\Aa}}$.
\end{proof}

The following theorem determine the homology of the complexes $(\Ee,\partial_0)$ and
$(\Ee[U],\partial_U)$.

\begin{theorem}
\label{homology of d0} \
(1) The homology of the differential $\partial_{0}$ is isomorphic to the
Orlik-Solomon algebra $\Aa=\Ee/\Ii$. This fact together with Theorem \ref{osb} provide
$$H_{*}(\Ee,\partial_{0})\simeq\Aa\simeq
H^{*}\left(V\setminus \cup_{i=1}^{r}\Hh_i\right).$$

(2) The homology of the differential $\partial_{U}$ is isomorphic (as $\BZ$--module) to the homology of the projectivized arrangement:
 $$H_{*}(\Ee[U],\partial_{U})\simeq
\ker\partial_{1}^{{\Aa}}\simeq H^{*}\left(\BP V\setminus \cup_{i=1}^{r}\BP \Hh_i\right),$$
and
it can be generated by a set of elements of type   $U^mz_K$, with $m=0$ and $K$ independent.

In particular, the induced  $U$--action on $H_{*}(\Ee[U],\partial_{U})$ is  trivial.

(3)  $\Ee$ is bi-graded: one can assign $|K|$, respectively $\rho(K)$, to $z_{K}$.
 $\partial_0$ decreases the first grading by $1$ and preserves the second one,
 hence $H_*(\Ee,\partial_0)$ is bi-graded too.  Nevertheless,
  the two gradings on $H_*(\Ee,\partial_0)$ agree, and the isomorphisms from (1) and (3)
  are graded isomorphisms (where  $H^{*}\left(V\setminus \cup_{i=1}^{r}\Hh_i\right)$
  and $H^{*}\left(\BP V\setminus \cup_{i=1}^{r}\BP \Hh_i\right)$ have their natural
  cohomological gradings).
\end{theorem}

\begin{proof}
(1) By Lemma \ref{d0reduction} one has
$\ker\partial_0=\Jj^{\perp}+\Imm \partial_{0}$ and $\Imm\partial_0=\partial_{0} \Jj$,
hence
$$H_{*}(\Ee,\partial_{0})=(\Jj^{\perp}+\partial_{0}\Jj)/\partial_{0}\Jj
\simeq \Jj^{\perp}/(\partial_{0}\Jj\cap \Jj^{\perp})\simeq \Ee/(\Jj+\partial_0\Jj).$$
The last identity follows from the splitting in Lemma \ref{d0reduction}(e). Then use Lemma \ref{d0reduction}(c).

(2)
Since $\partial_{U}=\partial_0+U\partial_1$, there exists a spectral sequence starting with
$H_{*}(\Ee[U],\partial_0)$ and converging to $H_{*}(\Ee[U],\partial_{U})$.
The $E^1$ page is
$((H_{*}(\Ee[U],\partial_0),U\partial_{1}^{\Aa}))=(\Aa[U],U\partial_{1}^{\Aa})$, and
by Lemma \ref{d1 acyclic on A} the $E^2$ page has a form:
$$
H_{*}(\Ee[U],\partial_{U})\simeq H_{*}(\Aa[U],U\partial_{1}^{{\Aa}})\simeq \ker\partial_{1}^{{\Aa}}.
$$
Since this homology is concentrated in the lowest $U$-degree, all higher differentials vanish.
This shows the first isomorphism of (2). For the second one  see \cite[Theorem 1.50]{BOOK09}.

(3)
>From the proof of part (1) follows that
$H^{*}(\Ee,\partial_0)$ can be identified with a quotient of $\Jj^{\perp}$.
Since $\Jj^{\perp}$ is spanned by the independent monomials, the gradings induced by
$|K|$ and $\rho(K)$ coincide on $H^{*}(\Ee,\partial_0)$. The isomorphisms  from
the already cited \cite[Theorem 5.2]{or1} and
\cite[Theorem 1.50]{BOOK09} are compatible with  this grading.
\end{proof}
\begin{remark}\label{rem:BP} (Cf. \cite[Corollary 3.58]{or2})
Since  $V\setminus \cup_{i=1}^{r}\Hh_i=\BC^*\times (\BP V\setminus \cup_{i=1}^{r}\BP \Hh_i)$,
the Poincar\'e polynomials $P(\Hh,t)$ and  $P(\BP\Hh,t)$
 of the cohomologies of the complements of the linear and projective arrangements satisfies
$(1+t)\cdot P(\BP\Hh,t)=P(\Hh,t)$.
\end{remark}

\begin{example}
Consider the arrangement of $r$ lines through the origin in $V=\BC^2$.
Then
$$\partial_{U}(1)=0,\ ~\partial_{U}(z_i)=U,\ ~\partial_{U}(z_i\wedge z_j)=U(z_i-z_j)=
U\partial(z_i\wedge z_j),$$
$$\partial_{U}(z_{K})=\partial(z_{K})\quad \mbox{\rm for}\quad |K|\ge 3.$$
The homology of $\partial_{U}$ is spanned by $1, z_1-z_2, \ldots, z_1-z_r$.
On the other hand, $\BP V\setminus \BP\Hh$ is the complement to $r$ points in $\mathbb{CP}^1$,
 homotopically equivalent to the bouquet of $(r-1)$ circles.
\end{example}

\subsection{The Orlik-Solomon complex and the lattice complex for curve singularities}\

Consider a curve singularity $C$, the associated lattice complex
(cf. Section \ref{s:lattice}) and the collection of local hyperplane arrangements $\Hh(v)$ (cf. \ref{ss:ARR}).
We wish to compare the Orlik-Solomon complex $(\Ee[U],\partial_U)$ associated with the local
hyperplane arrangement $\Hh(v)$ and the lattice complex $(\gr_v\Ll^-,\gr_v\partial_U)$.

\begin{theorem}\label{th:Lat+OS} (a)  For any fixed $v$ one has an isomorphism
$$H_{-2h(v)-b}(\gr_v \Ll^-,\gr_v\partial_U)=H_b(\Ee[U],\partial_U).$$
In the left hand side the homological degree is the one defined in (\ref{eq:HOMDEG1}),
while in the right hand side is induced by $\deg(z_K)=|K|$, cf. \ref{ss:4.2}.
(This is a $\BZ$ module isomorphism; since $U$ acts on $H_*(\Ee[U],\partial_U)$ trivially,
cf. \ref{homology of d0}, it acts on $H_*(\gr_v \Ll^-,\gr_v\partial_U)$ trivially as well.)

(b) Assume that the $ H_{a,b}(\gr_v\Ll^-,\gr_v\partial_U)\not=0$, where $(a,b)$ is the bi-grading
introduced in \ref{rem:bigr}. Then $(a,b)=(-2h(v)-2|K|,|K|)$ for some $K$.

\end{theorem}
\begin{proof}
Define $\psi:\gr_v\Ll^-\to \Ee[U]$ by $\psi(U^m\Box(v,K))=U^mz_K$. One verifies that it
is an isomorphism, and
$\partial _U\circ \psi=\psi\circ
\gr_v\partial_U$. Hence induces an isomorphism at homological level too.
By Theorem \ref{homology of d0}(3) for the generators we can assume that $m=0$ and $\rho_v(K)=|K|$.
Then the homological degree of $\sq(v,K)$ is $\deg=-2h(v+e_K)+|K|=-2h(v)-2\rho_v(K)+|K|=
-2h(v)-|K|$, while the degree of $z_K$ is $|K|$.
For (b) note  that the bi-degree of such $\sq(v,K)$ is $(-2h(v)-2|K|,|K|)$.
\end{proof}

\subsection{Proof of Theorem \ref{zlat}}
\label{sec:pf}

Assume $v\not\in \Ss$ and
fix  $i\in K_0$ such that $h(v)=h(v+e_i)$  (cf. Lemma \ref{eq:semi}), hence
 $h(v+e_K)=h(v+e_K+e_{i})$
for any $K$ with $K\not\ni i$ by \ref{lem:h matroid}.
Let $\phi:\gr_v\Ll^-\to
\gr_v\Ll^-$ be defined by
$$\phi(\Box(v,K))=\left\{\begin{array}{ll}
\Box(v,K\cup i_0) & \mbox{if $i_0\not\in K$},\\
0 &  \mbox{if $i_0\in K$}. \end{array}\right.$$
Then $\phi$ realizes a homotopy between the identity and the zero map:
$\partial_U\,\phi+\phi\,\partial_U=id$ on $\gr_v\Ll^-$. Hence $H_*(\gr_v\Ll^-)=0$.
On the other hand, $\Hh(v)=\emptyset$, hence $H^*(\BP\Hh(v))=0$ too.

If $v\in \Ss$  then parts (2) and (3) follow from Theorems \ref{th:Lat+OS} and \ref{homology of d0}(2).
A possible second proof of part (1) is the following (for the first proof see \ref{ss:3.2}):
$$P^{\Ll^-}_v(t^{-1})  \stackrel{\ref{th:Lat+OS}}{=}
t^{2h(v)}P(\Ee[U],\partial_{U},t)
\stackrel{\ref{homology of d0}(3)}{=}
t^{2h(v)}P(\BP\Hh(v),t)
\stackrel{\ref{rem:BP}}{=}
\frac{t^{2h(v)}}{1+t}P(\Hh(v),t) $$
$$\stackrel{\ref{osb}}{=}
\frac{t^{2h(v)}}{1+t}\cdot \sum_{K\subset K_0}\ (-1)^{|K|}(-t)^{\rho_v(K)}=
\frac{(-t)^{h(v)}}{1+t}\cdot \sum_{K\subset K_0}\ (-1)^{|K|}(-t)^{h(v+e_{K})}. 
$$

\section{Heegaard Floer link homology for algebraic links}\label{s:L}

\subsection{}
We can now apply the results of the previous sections to the computation of the Heegaard Floer homology of
algebraic links using the following result.

\begin{theorem}(\cite{gn})
All algebraic links are $L$-space links.
\end{theorem}

\begin{proposition}
\label{g equals h}
If $L$ is an algebraic link then its $HFL$-weight function coincides with the Hilbert function $h(v)$ up to an additive constant.
\end{proposition}
\begin{proof} By Theorem \ref{Poincare vs Alexander} the Poincar\'e series coincides
with the Alexander polynomial, hence with the Euler characteristic of $HFL^-$. The statement now
 follows from Theorems
\ref{reconst} and \ref{th:NEWTH}.
\end{proof}

\begin{theorem}
\label{thm:alg l space}
Let $L$ be an algebraic $L$-space link corresponding to a plane curve singularity $C$.
Then the spectral sequence defined in Theorem \ref{thm: l space ss} collapses  at $E^2$ page for all $v$:
$$\HFL^{-}(L,v)\simeq \HL^{-}(L,v)=H_{*}(\gr_{v} \Ll^{-}_{C}) \ \ \ \ (\mbox{as graded $\BZ$ modules}).$$
\end{theorem}

\begin{proof}
By Proposition
 \ref{g equals h} the $HFL$-weight function for $L$ coincides with the Hilbert function of $C$.
Consider the spectral sequence of Theorem \ref{thm: l space ss}. Its $E^2$ page coincides with
$H_*(\gr \Ll^{-})$.  We consider the bi-grading on $H_*(\gr_v\Ll^-)$, cf. \ref{rem:bigr}, and we use
the notations of the proof of Theorem \ref{thm: l space ss}. Note that the bi-grading
$(a,b)$ coincides exactly with $(\nu,|K|)$. Hence, by Theorems \ref{th:Lat+OS}
on the $E^2$ page all the non-trivial entries are on the line $\nu+2|K|+2h(v)=0$, while
the differential $d_k$ has bi-degree $(k-1,-k)$, hence two elements of this line are never
connected by  $d_k$
whenever $k\geq 2$. Hence $d_k=0$. \end{proof}

\begin{remark}
A similar spectral sequence was defined in the context of the subspace
arrangements by Jewell \cite{jewell}, who also proved its degeneration  at $E^2$ page.
\end{remark}

\begin{corollary}
By Corollary \ref{support hl}, the set of $v$ such that $\HFL^{-}(L,v)\neq 0$ coincides with the semigroup of $C$. In particular, the support of $\HFL^-$
determines the topological type of the algebraic link completely.
\end{corollary}

It is well known \cite{os} that for $L$-space {\it knots} (hence for all algebraic knots) the dimension
of the Heegaard Floer homology with given
Alexander grading is at most 1. For algebraic {\it links} we get the following generalization of this result (it was independently proven in \cite[Theorem 1.15]{Liu}
for general $L$-space links).

\begin{corollary}
If $L$ is an algebraic link, then $\rank \HFL^{-}(L,v)\le 2^{r-1}$ for all $v\in \BZ^r$.
For $v$ large enough ($v\succeq l$ in the notations of section \ref{ss:PA}) $\rank \HFL^{-}(L,v)=2^{r-1}$.
\end{corollary}

\begin{proof}
It is clear   from Theorem \ref{homology of d0} that the total dimension of the homology of the complement to $r$ hyperplanes cannot exceed $2^r$
and equals $2^r$ if and only if the hyperplanes are independent. By the same theorem, projectivization of the arrangement halves the total dimension of its homology.
It remains to note that by \eqref{eq:NEWNEW} the hyperplanes in the local arrangement $\Hh(v)$ are independent for $v\succeq l$.
\end{proof}

\section{Example. The Hopf link}\label{NewExample}

\subsection{}
We illustrate the main results of the paper for the positive Hopf link,
the  link of the $A_1$ singularity $\{xy=0\}$.
Its Alexander polynomial  equals $\Delta(t_1,t_2)=1$.

{\noindent \bf A. Hyperplane arrangements.} Let us describe the spaces $\Hh(v)$ explicitly. A function $g\in \BC[x,y]$ has order $0$ on one of the components if and only its constant term is nonzero,
and hence its order on the second component also equals $0$. Therefore
$$
\Hh(0,0)=\{\alpha+\text{higher order terms}\,|\,\alpha\neq 0\}\sim \BC^{*},\quad \Hh(a,0)=
\Hh(0,a)=\emptyset\ \text{for}\ \ a>0.
$$
Furthermore, for $a,b>0$ the space $\Hh(a,b)$ is
$$
\{\alpha x^{b}+\beta y^{a}+\text{terms of type $\gamma x^iy^j$ with
$(i,j)\geq (1,1)$, or $(b+1,0)$, or $(0,a+1)$}\,|\,\alpha,\beta\neq 0\},
$$
Therefore  $ \Hh(a,b)\sim (\BC^{*})^2$, and
\begin{equation}
\label{homology bp}
H^{*}(\BP\Hh(a,b))=\begin{cases}
H^*(\mbox{point})=\BZ & \text{if}\ a=b=0,\\
0 & \text{if}\ ab=0,\ (a,b)\not=(0,0)\\
H^*(\BC^*)=\BZ\oplus \BZ & \text{if}\ a,b>0.\\
\end{cases}
\end{equation}
Note that for $a,b>0$ the Euler characteristic of $\BP\Hh(a,b))$ vanishes, so
$$
\sum_{a,b\in \BZ^2}t_1^{a}t_2^{b}\ \chi(\BP\Hh(a,b)))=1=\Delta(t_1,t_2).
$$
{\noindent \bf B. Local lattice homology.} The Hilbert function of the $A_1$ singularity
is  (cf. Example \ref{ex:AN}):
$$
h(a,b)=\begin{cases}
\max(a,b),& \mbox{\rm if } \min(a,b)= 0,\\
a+b-1, & \mbox{\rm  otherwise.}\\
\end{cases}
$$
It is shown in Figure \ref{a1hilb}.
Let us compute the local lattice homology with the weight $h(v)$. 
\begin{figure}
\centering
\begin{tikzpicture}
\draw (0,0) node {\bf 0};
\draw (1,0) node {1};
\draw (2,0) node {2};
\draw (3,0) node {3};
\draw (4,0) node {4};
\draw (0,1) node {1};
\draw (1,1) node {\bf 1};
\draw (2,1) node {\bf 2};
\draw (3,1) node {\bf 3};
\draw (4,1) node {\bf 4};
\draw (0,2) node {2};
\draw (1,2) node {\bf 2};
\draw (2,2) node {\bf 3};
\draw (3,2) node {\bf 4};
\draw (4,2) node {\bf 5};
\draw (0,3) node {3};
\draw (1,3) node {\bf 3};
\draw (2,3) node {\bf 4};
\draw (3,3) node {\bf 5};
\draw (4,3) node {\bf 6};
\draw (0,4) node {4};
\draw (1,4) node {\bf 4};
\draw (2,4) node {\bf 5};
\draw (3,4) node {\bf 6};
\draw (4,4) node {\bf 7};

\draw [dashed] (0.2,0)--(0.9,0);
\draw [dashed] (1.2,0)--(1.9,0);
\draw [dashed] (2.2,0)--(2.9,0);
\draw [dashed] (3.2,0)--(3.9,0);
\draw [dashed] (0.2,1)--(0.9,1);
\draw [dashed] (1.2,1)--(1.9,1);
\draw [dashed] (2.2,1)--(2.9,1);
\draw [dashed] (3.2,1)--(3.9,1);
\draw [dashed] (0.2,2)--(0.9,2);
\draw [dashed] (1.2,2)--(1.9,2);
\draw [dashed] (2.2,2)--(2.9,2);
\draw [dashed] (3.2,2)--(3.9,2);
\draw [dashed] (0.2,3)--(0.9,3);
\draw [dashed] (1.2,3)--(1.9,3);
\draw [dashed] (2.2,3)--(2.9,3);
\draw [dashed] (3.2,3)--(3.9,3);
\draw [dashed] (0.2,4)--(0.9,4);
\draw [dashed] (1.2,4)--(1.9,4);
\draw [dashed] (2.2,4)--(2.9,4);
\draw [dashed] (3.2,4)--(3.9,4);
\draw [dashed] (0,0.3)--(0,0.8);
\draw [dashed] (1,0.3)--(1,0.8);
\draw [dashed] (2,0.3)--(2,0.8);
\draw [dashed] (3,0.3)--(3,0.8);
\draw [dashed] (0,1.3)--(0,1.8);
\draw [dashed] (1,1.3)--(1,1.8);
\draw [dashed] (2,1.3)--(2,1.8);
\draw [dashed] (3,1.3)--(3,1.8);
\draw [dashed] (0,2.3)--(0,2.8);
\draw [dashed] (1,2.3)--(1,2.8);
\draw [dashed] (2,2.3)--(2,2.8);
\draw [dashed] (3,2.3)--(3,2.8);
\draw [dashed] (0,3.3)--(0,3.8);
\draw [dashed] (1,3.3)--(1,3.8);
\draw [dashed] (2,3.3)--(2,3.8);
\draw [dashed] (3,3.3)--(3,3.8);
\draw [dashed] (4,0.3)--(4,0.8);
\draw [dashed] (4,1.3)--(4,1.8);
\draw [dashed] (4,2.3)--(4,2.8);
\draw [dashed] (4,3.3)--(4,3.8);

\end{tikzpicture}
\caption{Values of the Hilbert function for $A_1$ singularity}
\label{a1hilb}
\end{figure}
For all $v=(a,b)$ the local lattice complex has 4 generators $a_v,\alpha_v,\beta_v,\Gamma_v$ over $\BZ[U]$.
Here $a_v$ can be identified with the point $v$, $\alpha_v$ and $\beta_v$ can be identified with the east- and northward pointing segments starting at $v$
 and $\Gamma_v$ can be identified with the square with minimal vertex $v$. 
 The differential is given by the equation:
$$
\partial(a_v)=0,\ \partial(\alpha_v)=U^{h(a+1,b)-h(a,b)}a_v,\ \partial(\beta_v)=U^{h(a,b+1)-h(a,b)}a_v,\ $$ 
$$\partial(\Gamma_v)=U^{h(a+1,b+1)-h(a+1,b)}\alpha_v-U^{h(a+1,b+1)-h(a,b+1)}\beta_v.
$$
For $v=(0,0)$ one has $\partial(\alpha_v)=\partial(\beta_v)=Ua_v, \partial(\Gamma_v)=\alpha_v-\beta_v,$,
 so the homology is spanned by $a_v$.
For $v=(a,0), a>0$
one has $\partial(\alpha_v)=Ua_v, \partial(\beta_v)=a_v, \partial(\Gamma_v)=\alpha_v-U \beta_v,$ and the homology vanishes
(similarly as  for $v=(0,a)$). Finally, for $v=(a,b), a,b>0$ one has
$\partial(\alpha_v)=\partial(\beta)=Ua_v,\ \partial(\Gamma_v)=U(\alpha_v-\beta_v)$ 
and the homology is spanned by $a_v$ and $\alpha_v-\beta_v$,
in agreement with \eqref{homology bp}. The homological degrees are $-2(a+b)+2$ and $-2(a+b)+1$.
Note that  $U$ acts by $0$ on the homology in all cases.

{\noindent \bf C. Link Floer homology.}
 Similarly to \cite[Section 12]{os3}, one can check that the minimal Heegaard Floer complex $CFL^{-}$ has four $\BZ[U_1,U_2]$-generators $\alpha,\beta,\gamma,\delta$ of Alexander
gradings  $(0,0)$, $(1,0)$, $(0,1)$, $(1,1)$ and homological degrees $0,-1,-1,-2$.
The differential is $\BZ[U_1,U_2]$--linear given by the formula:
$$
d(\beta)=U_1\alpha+\delta,\ d(\gamma)=U_2\alpha+\delta,\ d(\alpha)=d(\delta)=0.
$$
The filtered subcomplex $\Cc(v)$ is spanned by all elements of Alexander grading greater than or equal to $v$.
By definition, $\HFL^-(v)$ is the homology of the associated graded complexe $\gr \Cc(v)$. For $v=(0,0)$ the complex $\gr \Cc(0,0)$ is generated over $\BZ$ by a single element $\alpha$.
For $a>0$ the complex $\gr \Cc(a,0)$ is generated  over $\BZ$
by $U_1^{a}\alpha,  U_1^{a-1}\beta$, with the differential
$$d_{\gr}(U_1^{a-1}\beta)=U_1^{a}\alpha.$$
Therefore $\gr A^-(a,0)$  (and similarly $\gr A^-(0,a)$) is acyclic.
Finally, for $a,b>0$ the complex  $\gr \Cc(a,b)$ is generated  by $U_1^{a}U_2^{b}\alpha,$ $U_1^{a-1}U_2^{b}\beta$, $ U_1^{a}U_2^{b-1}\gamma$ and $U_1^{a-1}U_2^{b-1}\delta$, with the differential
$$d_{\gr}(U_1^{a-1}U_2^{b}\beta)=d_{\gr}(U_1^{a}U_2^{b-1}\gamma)=U_1^{a}U_2^{b}\alpha.$$
Its homology (in  agreement with \eqref{homology bp}) equals
$$\HFL^-(a,b)=H^{*}(\gr \Cc(a,b))\simeq \BZ\langle U_1^{a-1}U_2^{b-1}\delta, U_1^{a-1}U_2^{b}\beta-U_1^{a}U_2^{b-1}\gamma \rangle.$$
{\noindent \bf D. Filtered subcomplexes.}
Let us also compute the homology of $\Cc(v)$ for various $v$.
The complex $\Cc(0,0)$ coincides with $CFL^{-}$ and its homology has the form
$$H^{*}(\Cc(0,0))=\BZ[U_1,U_2]\langle \alpha \rangle/(U_1\alpha=U_2\alpha)\simeq \BZ[U]\langle \alpha \rangle.$$
For $a>0$ the complex $\Cc(a,0)$ is generated over $\BZ[U_1,U_2]$ by $U_1^{a}\alpha, U_1^{a-1}\beta, U_1^{a}\gamma$ and
$U_1^{a-1}\delta$. One can check that
$$H^{*}(\Cc(a,0))\simeq \BZ[U]\langle U_1^{a-1}\delta \rangle,$$
and its generator has homological degree $-2a$. Similarly,
$H^{*}(\Cc(0,b))\simeq \BZ[U]\langle U_2^{b-1}\delta \rangle$ generated at
 degree $-2b$.
Finally,  for $a,b>0$ the subcomplex $\Cc(a,b)$ is generated over $\BZ[U_1,U_2]$ by
$U_1^{a}U_2^{b}\alpha, U_1^{a-1}U_2^{b}\beta, U_1^{a}U_2^{b-1}\gamma$ and
$U_1^{a-1}U_2^{b-1}\delta$.
One can check that
$$H^{*}(\Cc(a,b))\simeq \BZ[U]\langle U_1^{a-1}U_2^{b-1}\delta \rangle,$$
and its generator has homological degree $-2a-2b+2$.
Therefore for all $v$ the subcomplex $\Cc(v)$ is a free $\BZ[U]$-module of rank 1, and its generator has homological degree $-2h(v)$.

\end{document}